\newcommand{\nc}{\newcommand}
\numberwithin{equation}{section}
\newtheorem{thm}{Theorem}[section]
\newtheorem{prop}[thm]{Proposition}
\newtheorem{lem}[thm]{Lemma}
\newtheorem{cor}[thm]{Corollary}
\theoremstyle{remark}
\newtheorem{rem}[thm]{Remark}
\newtheorem{dfn}[thm]{Definition}
\nc{\gl}{\mathfrak{gl}}
\nc{\GL}{\mathfrak{GL}}
\nc{\g}{\mathfrak{g}}
\nc{\gh}{\widehat\g}
\nc{\h}{\mathfrak{h}}
\nc{\la}{\lambda}
\nc{\C}{\mathbb C }
\nc{\Z}{\mathbb Z }
\nc{\N}{\mathbb N }
\nc{\R}{\mathbb R }
\nc{\Q}{\mathbb Q }
\nc{\al}{\alpha }
\nc{\ta}{\theta}
\nc{\ve}{\varepsilon}
\nc{\ch}{{\mathop {\rm ch}}}
\nc{\Tr}{{\mathop {\rm Tr}\,}}
\nc{\Id}{{\mathop {\rm Id}}}
\nc{\ad}{{\mathop {\rm ad}}}
\nc{\bra}{\langle}
\nc{\ket}{\rangle}
\nc{\x}{{\bf x}}
\nc{\pa}{\partial}
\nc{\ld}{\ldots}
\nc{\cd}{\cdots}
\nc{\hk}{\hookrightarrow}
\nc{\T}{\otimes}
\newcommand{\bea}{\begin{equation}}
\newcommand{\ena}{\end{equation}}
\newcommand{\be}{\begin{equation*}}
\newcommand{\en}{\end{equation*}}
\nc{\gr}{\mathrm{gr}}
\nc{\ov}{\overline}
\nc{\cO}{\mathcal O}
\nc{\msl}{\mathfrak{sl}}
\nc{\mgl}{\mathfrak{gl}}
\nc{\U}{\mathrm U}
\nc{\V}{\EuScript V}
\nc{\bH}{\EuScript H}
\nc{\Res}{\mathrm{Res\ }}
\newcommand{\fn}{{\mathfrak n}}
\newcommand{\bu}{{\bf u}}
\newcommand{\eO}{\EuScript{O}}
\newcommand{\Hom}{\mathrm{Hom}}
\begin{document}

\title[Systems of correlation functions and the Verlinde algebra]
{Systems of correlation functions, coinvariants and the Verlinde algebra}

\author{Evgeny Feigin}
\address{Evgeny Feigin:\newline
Department of Mathematics, University Higher School of Economics,
20 Myasnitskaya st, 101000, Moscow, Russia
{\it and }\newline
Tamm Theory Division,
Lebedev Physics Institute,\newline
Leninisky prospect, 53,
119991, Moscow, Russia}
\email{evgfeig@gmail.com}

\begin{abstract}
We study the Gaberdiel-Goddard spaces of systems of correlation functions
attached to an affine Kac-Moody Lie algebra $\gh$. We prove
that these spaces are isomorphic to the spaces
of coinvariants with respect to certain subalgebras of $\gh$. This allows to
describe the Gaberdiel-Goddard spaces as  direct sums of tensor
products of irreducible $\g$-modules with multiplicities given by fusion coefficients. We thus
reprove and generalize Frenkel-Zhu's theorem.
\end{abstract}

\maketitle

\section*{Introduction}
Let $\V$ be a vertex operator algebra. In \cite{Z} Zhu introduced an associative  algebra
$A(\V)$ which captures important information about representation theory
of $\V$ (see \cite{FZ}, \cite{DM}, \cite{GG}).
The algebra $A(\V)$ is defined as a quotient of the vacuum module of $\V$. More generally,
Zhu constructed a set of $A(\V)$-modules $A(\bH)$ labeled by $\V$-modules $\bH$.
The spaces $A(\bH)$ are defined as quotients of $\bH$. It was shown in \cite{FZ} that
for the VOAs associated with integrable irreducible representations of affine Kac-Moody
algebras, these quotients can be described via fusion coefficients (structure constants of the
Verlinde algebra).

It is known that the theory of VOAs is a key ingredient in mathematical description
of the models of conformal field theory (see \cite{G}, \cite{DMS}).
In \cite{GabGod} Gaberdiel and Goddard suggested
an axiomatic approach to CFT based on the systems of correlation functions. These systems
depend on a parameter $\bu=(u_1,\dots,u_n)\in(\mathbb{CP}^1\setminus\{0\})^n$. It was shown in
\cite{GabGod}, \cite{N}, \cite{GN} that the systems in question are in one-to-one
correspondence with quotients $A_\bu(\bH)=\bH/O_\bu(\bH)$ of representations $\bH$ of a given CFT
(representations of the attached VOA) by certain subspaces $O_\bu(\bH)$.
An important fact is that Zhu's modules $A(\bH)$ coincide with $A_{(-1,\infty)}(\bH)$.

The goal of this paper is to study the Gaberdiel-Goddard spaces $A_\bu(\bH)$ for the WZW models
on non-negative integer level (or, equivalently, for $\bH$ being integrable irreducible
representation of an affine Kac-Moody Lie algebra $\gh$). Recall that $\gh$ is the
non-trivial central extension of the Lie algebra $\g\T\C[t,t^{-1}]$. Our main theorem is the description
of the spaces  $O_\bu(\bH)$:

\begin{thm}\label{H}
Let $\bH$ be an integrable irreducible $\gh$-module. Then
\[
O_\bu(\bH)=\Bigl( \g\T\prod_{j=1}^n (t^{-1}-u_j^{-1})\C[t^{-1}]\Bigr)\bH.
\]
\end{thm}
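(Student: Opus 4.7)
The plan is to prove the two inclusions $\supseteq$ and $\subseteq$ separately. Write $p_\bu(t^{-1}):=\prod_{j=1}^n(t^{-1}-u_j^{-1})\in\C[t^{-1}]$, a polynomial in $t^{-1}$ of degree $n$; the right-hand side of the theorem is then $(\g\T p_\bu(t^{-1})\C[t^{-1}])\bH$.

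For the inclusion $\supseteq$, I would rely on the explicit description of $O_\bu(\bH)$ from \cite{GabGod, N, GN} as the span of residues
\[
\mathrm{Res}_{z=0}\,Y(a,z)v\,\phi(z)\,dz,
\]
where $a$ ranges over the vacuum VOA $\V$ attached to $\gh$, $v\in\bH$, and $\phi(z)$ is a rational function whose pole structure is controlled by $\bu$ (zeros at the $u_j$ and poles only at $z=0$). Specializing $a=(X\T t^{-1})|0\rangle$ for $X\in\g$ yields the current field $Y(a,z)=\sum_m X_m z^{-m-1}$, and choosing $\phi(z)=z^{-k}p_\bu(z^{-1})$ for $k\geq 0$, a direct residue computation produces the element $(X\T t^{-k}p_\bu(t^{-1}))v$. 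Letting $X$, $k$, and $v$ range exhausts the right-hand side. (A sanity check: for $\bu=(-1,\infty)$, $k=0$ and $\phi(z)=z^{-1}+z^{-2}=(1+z)/z^2$, this recovers Zhu's generators $(X_{-1}+X_{-2})v$.)

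For the inclusion $\subseteq$, the central task is to reduce the contribution of an arbitrary $a\in\V$ to the current case already handled. Since $\V$ is generated from the vacuum by iterated application of negative current modes, the Borcherds associativity identity (or the iterated OPE) allows one to rewrite $\mathrm{Res}_{z=0}Y(a,z)v\,\phi(z)\,dz$ as an iterated multi-variable residue in which the single test function $\phi(z)$ is distributed, via partial-fraction expansion around the $u_j$ and around $0$, across a product of current fields. In each resulting term the vanishing of $\phi$ at the $u_j$ must propagate into at least one of these factors, which therefore contributes a current mode lying in $\g\T p_\bu(t^{-1})\C[t^{-1}]$; the remaining factors act by elements of $U(\gh)$. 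Moving the ``bad'' current mode past them — which introduces lower-conformal-weight commutator corrections feeding an induction on the weight of $a$ — shows that the total action lies in $(\g\T p_\bu(t^{-1})\C[t^{-1}])\bH$.

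The main obstacle I anticipate is precisely the bookkeeping in this last step: ensuring that the polynomial factor $p_\bu(t^{-1})$ is always inherited by at least one current slot under the partial-fraction decomposition, and that the induction on conformal weight closes without leaking outside the subspace. The integrability hypothesis on $\bH$ enters twice: to guarantee termination of the expansions on each fixed vector, and, via the Sugawara construction, to absorb Virasoro-descendant contributions (arising when $a$ involves the conformal vector $\omega$) into pure current expressions, so that no generators beyond $\g\T\C[t,t^{-1}]$ appear in the final description.
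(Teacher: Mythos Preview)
Your $\supseteq$ argument is correct and matches the paper: specializing the residue definition to $A=a\otimes t^{-1}$ for $a\in\g$ already produces all of $(\g\otimes p_\bu(t^{-1})\C[t^{-1}])\bH$. For $\subseteq$ your overall strategy---induct on the length of $A$ as a monomial in current modes, using the normal-ordered-product structure of $Y(A,z)$---is also the paper's, but you have not supplied the one step that makes it work. After writing $Y(a\otimes t^{-1}B,z)=\,:a(z)Y(B,z):$ and disposing of the easy pieces, one is left with a finite cross-term
\[
\sum_{l=0}^{N-1}\sum_{i=1}^{N-l}\alpha^{(s+1)}_{i+l}\,(a\otimes t^{-i})\,B_{(-M-l)},
\qquad
\prod_{j=2}^n(v-u_j^{-1})^{s+1}=\sum_p \alpha^{(s+1)}_p v^p,
\]
in which neither factor is individually good. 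Your proposal says the vanishing of $\phi$ at the $u_j$ ``must propagate into at least one of these factors'' via partial fractions, but $\phi$ is already a polynomial in $z^{-1}$ with its only pole at $0$, so there is nothing to partial-fraction, and a priori no reason the zeros should split across a normal-ordered pair. The paper closes this gap with a concrete polynomial identity: $\sum_{i,l\ge 0}\alpha^{(s+1)}_{i+l}x^iy^l$ lies in the ideal $\bigl(\prod_{j}(x-u_j^{-1}),\ \prod_{j}(y-u_j^{-1})^s\bigr)\subset\C[x,y]$. This is exactly the ``distribution'' you are hoping for, and it is the heart of the proof; without it (or an equivalent) your induction does not close.

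Two smaller corrections. Commuting $a\otimes t^{-i}$ past $B_{(-M-l)}$ does not lower conformal weight, so the ``commutator corrections feeding an induction on weight'' mechanism would not terminate as stated; the paper instead uses directly that $O^c_\bu$ is stable under left multiplication by $\g\otimes\C[t^{-1}]$ and under right multiplication by anything, so once the polynomial identity is in hand no commuting is needed. And integrability plays no role in the proof of this theorem (it enters only later, for finite-dimensionality of the coinvariants); nor is there a separate Virasoro generator to absorb, since $\V_k(\g)$ is generated by currents and the Sugawara vector is already quadratic in them.
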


Therefore the spaces $A_\bu(\bH)$ are isomorphic to the coinvariants with respect to the subalgebra
$\g\T\prod_{j=1}^n (t^{-1}-u_j^{-1})\C[t^{-1}]$.
Using the results from \cite{FKLMM} and \cite{Fi} we obtain the following corollary:
\begin{cor}\label{cor}
Let $u_i\ne u_j$ for $i\ne j$. Then
\begin{equation}\label{N}
A_\bu(L_\la)\simeq
\bigoplus_{\mu_1,\dots,\mu_n\in P_+^k} N_{\mu_1,\dots,\mu_n}^{\la;k} V_{\mu_1}\T \dots \T V_{\mu_n}.
\end{equation}
\end{cor}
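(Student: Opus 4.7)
The plan is to apply Theorem~\ref{H} directly and then to invoke known decomposition results for coinvariants.

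First, I would use Theorem~\ref{H} to identify
$$A_\bu(L_\la)\;=\;L_\la\big/\bigl(\g\T I_\bu\bigr)L_\la,\qquad I_\bu\;:=\;\prod_{j=1}^n (t^{-1}-u_j^{-1})\,\C[t^{-1}],$$
thereby reducing the problem to computing the coinvariants of the integrable irreducible $\gh$-module $L_\la$ with respect to the Lie subalgebra $\g\T I_\bu$.

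Next, I would observe that, under the hypothesis $u_i\ne u_j$ (together with $u_j\ne 0$, built into $\bu\in(\mathbb{CP}^1\setminus\{0\})^n$), the points $u_j^{-1}\in\C$ are pairwise distinct, and then apply the Chinese Remainder Theorem to obtain
$$\C[t^{-1}]\big/I_\bu\;\simeq\;\bigoplus_{j=1}^n \C$$
via the $n$ evaluation maps $f(t^{-1})\mapsto f(u_j^{-1})$. Tensoring with $\g$ yields an isomorphism of Lie algebras $(\g\T\C[t^{-1}])/(\g\T I_\bu)\simeq\g^{\oplus n}$, so $A_\bu(L_\la)$ inherits the structure of a $\g^{\oplus n}$-module in which the $j$-th factor acts by evaluation of the loop variable $t^{-1}$ at $u_j^{-1}$.

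Finally, I would recognize the resulting object as precisely the type of coinvariant space studied in \cite{FKLMM} and \cite{Fi}, whose results assert that for $L_\la$ of level $k$ the $\g^{\oplus n}$-module of coinvariants with respect to $\g\T I_\bu$ decomposes as the right-hand side of (\ref{N}), with multiplicities given by the level-$k$ fusion coefficients $N^{\la;k}_{\mu_1,\dots,\mu_n}$. Combining the three ingredients gives the corollary. Since the substantive input is absorbed into Theorem~\ref{H} (which supplies the coinvariant description of $A_\bu$) and into those references (which supply the Verlinde-type decomposition), the only real obstacle I anticipate is bookkeeping: verifying that the subalgebra relative to which \cite{FKLMM} and \cite{Fi} compute coinvariants matches, after the CRT identification above, the subalgebra $\g\T I_\bu$ appearing in Theorem~\ref{H}.
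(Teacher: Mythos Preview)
Your proposal is correct and follows essentially the same route as the paper: apply Theorem~\ref{H} to identify $A_\bu(L_\la)$ with the coinvariants $L_\la/(\g\otimes I_\bu)L_\la$, use the Chinese Remainder Theorem to endow this quotient with a $\g^{\oplus n}$-module structure, and then invoke \cite{FKLMM} and \cite{Fi} for the Verlinde-type decomposition. The paper carries out exactly this in Section~\ref{coinvariants} (Theorem~\ref{coinv}) and then combines it with Theorem~\ref{main} in Theorem~\ref{distinct}; the only point it makes more explicit than you do is an intermediate finite-dimensionality argument (reducing to the $\msl_2$ case of \cite{FKLMM}) before computing the multiplicities via the chain of isomorphisms through parabolic Verma modules.
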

Here $\bH=L_\la$ is an integrable irreducible highest weight $\gh$-module, $V_{\mu_i}$ are irreducible
finite-dimensional $\g$-modules of highest weights $\mu_i$ and $N_{\mu_1,\dots,\mu_n}^{\la;k}$
are structure constants of the level $k$ Verlinde algebra attached to $\gh$.
Theorem \ref{H} together with  Corollary \ref{cor}
reprove and generalize  the Frenkel-Zhu theorem \cite{FZ} (treating the $n=2$ case) to an arbitrary
number of pairwise distinct points.

One of the most intriguing questions related to Zhu's and Gaberdiel-Goddard's spaces is as
follows (see \cite{GG}, \cite{FFL}, \cite{FL}): what happens when the points $u_j$ are allowed to coincide?
Using Theorem \ref{H} above and the results of \cite{FKLMM} we show that for $\g=\msl_2$ the isomorphism
\eqref{N} holds even if some points do coincide.

Our paper is organized as follows:\\
In Section $1$ we recall the generalities about
affine Kac-Moody algebras and vertex operator algebras.\\
In Section $2$ we prove Theorem \ref{H}.\\
In Section $3$ we discuss the isomorphism between the
coinvariants and the right hand side of \eqref{N}.\\
Finally, in Section $4$, we derive corollaries from Theorem \ref{H} and discuss generalizations.

\section{Affine Kac-Moody algebras and vertex operator algebras}
In this section we collect definitions and properties of affine Kac-Moody algebras
and vertex operator algebras (VOA for short) we use in the main body of the paper.
Our references here are \cite{K1}, \cite{BF}, \cite{K2}.

\subsection{Affine Kac-Moody algebras}
Let $\g$ be a simple Lie algebra with the Cartan decomposition $\g=\fn\oplus\h\oplus \fn^-$.
Let $\omega_1,\dots,\omega_l$ and $\al_1,\dots,\al_l$ be the fundamental weights and simple roots
of $\g$, $\omega_i,\al_i\in\h^*$. The weights $\omega_i$ generate the weight lattice $P$.
Its dominant cone $\{\sum_{i=1}^l s_i\omega_i, \ s_i\in\Z_{\ge 0}\}$ is denoted by $P_+$.
We denote by $\triangle_+$ the set of positive roots of $\g$.
Let  $\theta\in \triangle_+$ be the highest
root (the highest weight of the adjoint representation). For $k\in\Z_{\ge 0}$ we set
\[
P_+^k=\{\la\in P_+:\ (\la,\theta)\le k\},
\]
where $(\cdot,\cdot)$ is the Killing form normalized by $(\theta,\theta)=2$. For $\la\in P_+$ let
$V_\la$ be the irreducible $\g$-module with highest weight $\la$ and highest weight vector $v_\la\in V_\la$.
In particular, $\fn v_\al=0$ and $V_\la=\U(\fn^-)v_\la$, where $\U(\fn^-)$ is the universal enveloping
algebra. For a weight $\la\in P_+$ we denote by $\la^*\in P_+$ the highest weight of the dual
module: $V_{\la^*}\simeq V_\la^*$.

Let $\gh=\g\T\C[t,t^{-1}]\oplus \C K$ be the affine Kac-Moody algebra associated with $\g$.
The bracket on $\gh$ is given by
\[
[x\T t^i, y\T t^j]=[x,y]\T t^{i+j} + i\delta_{i+j,0}(x,y)K, \ x,y\in\g, i,j\in\Z
\]
and $K$ is central. Let $\gh=\widehat{\fn}\oplus \widehat{\h}\oplus \widehat{\fn}^-$ be the Cartan
decomposition. We have
$$
\widehat{\fn}= \fn\T 1 \oplus \g\T t\C[t],\
\widehat{\h}=\h\T 1\oplus \C K,\
\widehat{\fn}^-= \fn^-\T 1 \oplus \g\T t^{-1}\C[t^{-1}].
$$
Since $K$ is central, it acts as a scalar
on any irreducible $\gh$-module. This scalar is called the level of a module.
For each $\la\in P_+^k$ there exists the highest weight irreducible $\gh$-module $L_\la$
of level $k$ with a highest weight vector $l_\la\in L_\la$ such that
$\widehat{\fn}l_\la=0$ and $\U(\widehat{\fn}^-)l_\la=L_\la$.

The modules $L_\la$, $\la\in P_+^k$ form a complete list of integrable irreducible
highest weight representations of level $k$. We note that $L_\la$ carries a natural degree (or energy)
grading $L_\la=\bigoplus_{d\ge 0} L_\la(d)$ defined as follows:
$L_\la(0)$ contains $l_\la$ and $x\T t^i$ acts from $L_\la(d)$ to $L_\la(d-i)$
for all $x\in\g$ and $i\in\Z$. In particular, $L_\la(0)=V_\la$.

\subsection{Vertex operator algebras $\V_k(\g)$}
Recall that a vertex operator algebra is a collection of fields
$Y(A,z)$ subject to certain conditions. The fields are labeled by vectors $A\in V$,
where $V$ is some infinite-dimensional vector space. The space $V$ carries a grading
$V=\bigoplus_{d\ge 0} V(d)$. For an element $A\in V(d)$ we set $|A|=d$. Each field $Y(A,z)$ is
a series in $z,z^{-1}$ with coefficients in $\mathrm{End}(V)$:
\[
Y(A,z)=\sum_{m\in\Z} A_{(m)} z^{-m-|A|}, \ A_{(m)}\in \mathrm{End} V.
\]
There is a special vacuum vector $|0\rangle\in V$ which spans $V(0)$. The corresponding field
is trivial: $Y(|0\rangle,z)=\mathrm{Id}$. Two most important properties of the fields are as follows:
\begin{enumerate}
\item $\forall A,B\in V$ $\exists N\in\Z$ such that $\forall m>N$ $A_{(m)}B=0$; \label{field}
\item $\forall A,B\in V$ $\exists N\in\Z_{>0}$ such that $(z-w)^N [Y(A,z),Y(B,w)]=0$.
\end{enumerate}
The second property is referred to as the locality property.
In addition, the space of fields is endowed with the structure of the operator product expansion (OPE).
More precisely, for any two elements $A,B\in V$ the following equality holds
\[
Y(A,z)Y(B,w)=\sum_{n\in\Z} Y(A_{(n)}B,w)(z-w)^{-n-|A|}.
\]
The OPE is used to define the notion of the representation of a vertex operator algebra: 
a vector space $M$ endowed with a set of $\mathrm{End}(M)$-valued series 
$Y_M(A,z)$, $A\in V$  is called a $V$-modules if the following relations hold:
\[
Y_M(A,z)Y_M(B,w)=\sum_{n\in\Z} Y_M(A_{(n)}B,w)(z-w)^{-n-|A|}.
\]
For example, the vertex operator algebra $V$ itself together with the series $Y(A,z)$
form the so-called adjoint representation.

It turns out that the level $k$ vacuum $\gh$-module $L_0$ can be endowed with the structure of a VOA.
This VOA is denoted by $\V_k(\g)$. The grading is given by the energy grading and the vacuum vector is $l_0$.
The precise formulas for the fields are as follows.
First, for $a\in\g$
\[
Y(a\T t^{-1}l_0,z)=\sum_{m\in \Z} (a\T t^m)z^{-m-1},
\]
i.e. $(a\T t^{-1}l_0)_{(m)}=a\T t^m$.
In general, setting $a(z)=Y(a\T t^{-1}l_\la,z)$, one gets for $a_i\in\g$, $s_i>0$:
\begin{multline}\label{VOA}
Y(a_1\T t^{-s_1}\cdots a_N\T t^{-s_N}l_0,z)\\ =\frac{1}{(s_1-1)!\dots (s_N-1)!}
: \pa^{s_1-1} a_1(z)\dots \pa^{s_N-1} a_N(z):,
\end{multline}
where $\pa$ is the $z$-derivative and $:\ :$ denotes the normally ordered product.
For two fields $Y(A,z)$ and $Y(B,z)$ their normally ordered product is defined by the formula
\begin{multline}\label{::}
:Y(A,z)Y(B,z):\\ =\sum_{m_2\in\Z} \left(
\sum_{m_1\le -|A|} A_{(m_1)}B_{(m_2)} z^{-m_1-|A|} +
\sum_{m_1> -|A|} B_{(m_2)}A_{(m_1)} z^{-m_1-|A|}
\right) z^{-m_2-|B|}.
\end{multline}

\begin{rem}
Thanks to the property \eqref{field}, the normally ordered product of two fields is well defined.
\end{rem}

The normally ordered product of $N$ fields is defined recursively:
\[
:Y(A,z)Y(B,z)Y(C,z):\ =\ :Y(A,z)(:Y(B,z)Y(C,z):):
\]
and so on.


\begin{rem}
Formula \eqref{VOA} is not specific for $\V_k(\g)$. This is a particular case of the general
reconstruction procedure for getting formulas for all fields starting from the generating ones.
\end{rem}

The space $L_0$ carries a structure of a representation of the Virasoro algebra 
given by the Segal-Sugawara construction. Let $T_i$, $i\in\Z$ be the generators 
of the Virasoro algebra. Then one has
\begin{equation}\label{Vir}
Y(T_{-1}A,z)=\pa Y(A,z)
\end{equation}
for all $A\in L_0$.

For a series $f(z)=\sum_{i\in\Z} f_i z^i$ its formal residue is defined as a coefficient in front of $z^{-1}$:
$\Res f(z)=f_{-1}$. Obviously, $\Res \pa f(z)=0$.

\section{Systems of correlation functions}
Let $\bu=(u_1,\dots,u_n)\in (\mathbb{CP}^1\setminus \{0\})^n$
be a collection of (non necessarily distinct) points.
In \cite{GabGod} the authors defined certain quotient $A_{\bu}$ of
representations of a given VOA.
These quotients depend on $\bu$ and describe systems of correlation function
on the sphere (see \cite{GN}). The systems in question play the crucial role in the axiomatic
approach to CFT developed in \cite{GabGod}
(see also \cite{GN}, \cite{G}). Let us recall the main definitions.

Let $\bH$ be a representation of a VOA $\V$.
Assume for a moment that $u_1=\infty$ and $u_i\ne \infty, i>1$. Then the Gaberdiel-Goddard
space $A_\bu(\bH)$ is defined as
$$A_{\bu}(\bH)=\bH/O_{\bu}(\bH),$$
where the space $O_{\bu}(\bH)=O_{(u_1,\dots,u_n)}(\bH)\hk \bH$
is spanned by the vectors.
\begin{equation}\label{Y}
\Res (z^{-1-M+(2-n)|A|}\prod_{j=2}^{n}(z-u_j)^{|A|}Y(A,z)v)
\end{equation}
for all $M>0, A\in\V, v\in \bH.$
In addition, the family of spaces $A_\bu(\bH)$ is invariant with respect to the diagonal action of the group
$PSL_2$ on $\bu\in (\mathbb{CP}^1)^n$. This property is called the M\"obius invariance.
Using the M\"obius invariance in what follows we assume without loss of generality that
$u_1=\infty$.

It is convenient for us to rewrite formula \eqref{Y} slightly to exclude the restriction
$u_i\ne \infty, i>1$.
Namely, let
\begin{equation}\label{YM}
Y_{\bu}^{(M)}(A)=\Res (z^{-1-M+|A|}\prod^n_{j=2}(z^{-1}-u^{-1}_j)^{|A|}Y(A,z)).
\end{equation}
Clearly $Y_{\bu}^{(M)}(A)v$ is proportional to \eqref{Y} if $u_i\ne \infty, i>1.$
\begin{dfn}
For each $n$-tuple $\bu=(u_1,\dots,u_n)\in (\mathbb{CP}^1\setminus\{0\})^n$, $u_1=\infty$
the subspace  $O_\bu(\bH)\hk\bH$ is a linear span of the elements
$Y_{\bu}^{(M)}(A)v$ with $A\in \V$, $M>0$, $v\in\bH$. The corresponding quotient space is denoted
by $A_\bu(\bH)$: $A_{\bu}(\bH)=\bH/O_u(\bH)$.
\end{dfn}

\begin{rem}
If  $u_i=\infty$ for some $i$, we put $u_i^{-1}=0$.
\end{rem}

In what follows we are only concerned with vertex operator algebras $\V_k(\g)$
attached to an affine Kac-Moody algebras $\gh$ on level $k\in\Z_{\ge 0}$.
For a weight $\la\in P_+^k$ we denote
$O_{\bu}(\la)=O_{\bu}(L_\la), A_{\bu}(\la)=A_{\bu}(L_\la).$
Our goal is to prove that
\begin{equation}\label{=}
O_{\bu}(\la)=(\g\T\prod^n_{j=1}(t^{-1}-u^{-1}_j)\C[t^{-1}])L_{\la}
\end{equation}
(recall that $u_1=\infty$).
We denote the right hand side by $O^c_{\bu}(\la)$ (the upper index $c$ is because of coinvariants,
see Section \ref{coinvariants}). 

\begin{rem}
If $u_j=\infty$ for all $j=1,\dots,n$, then 
\[
O_\bu^c(\la)=(\g\T t^{-n}\C[t^{-1}])L_\la,
\]
which agrees with the general formula for $O_{(\infty,\dots,\infty)}(\la)$, see
for example \cite{GN}.
\end{rem}

We start  with the following simple but important lemma:
\begin{lem}\label{Res}
For any $a\in \g, M\in \Z$ and $p_1,\dots,p_N \in\C$
$$
\Res \left( z^M \prod_{j=1}^N(z^{-1}-p_j)Y(a\T t^{-1},z)\right)=a\T t^M \prod^N_{j=1}(t^{-1}-p_j).
$$
\end{lem}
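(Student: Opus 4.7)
The plan is a direct computation from the explicit formula for the generating field.

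First I would recall that by the VOA formula in the previous section,
\be
Y(a\T t^{-1},z)=\sum_{m\in\Z}(a\T t^m)\,z^{-m-1},
\en
so the series has only one tensor factor per power of $z$ and no normal ordering issues arise.

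Next I would expand the polynomial in $z^{-1}$
\be
\prod_{j=1}^N(z^{-1}-p_j)=\sum_{k=0}^N q_k\, z^{-k},
\en
where $q_k=(-1)^{N-k}e_{N-k}(p_1,\dots,p_N)$ are the (signed) elementary symmetric polynomials in the $p_j$. The crucial observation is that the \emph{same} coefficients $q_k$ appear in
\be
\prod_{j=1}^N(t^{-1}-p_j)=\sum_{k=0}^N q_k\, t^{-k},
\en
so the identity we want to prove is really saying that $z^{-1}$ and $t^{-1}$ play symmetric roles here.

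Then I would simply multiply: the expression under the residue becomes
\be
z^M\sum_{k=0}^N q_k z^{-k}\sum_{m\in\Z}(a\T t^m)z^{-m-1}
=\sum_{k=0}^N\sum_{m\in\Z} q_k\,(a\T t^m)\,z^{M-k-m-1}.
\en
Extracting the coefficient of $z^{-1}$ forces $m=M-k$, giving
\be
\Res(\cdots)=\sum_{k=0}^N q_k\,(a\T t^{M-k})=a\T t^M\sum_{k=0}^N q_k t^{-k}
=a\T t^M\prod_{j=1}^N(t^{-1}-p_j),
\en
which is exactly the claim. There is no real obstacle — the lemma is a bookkeeping identity whose content is that taking the residue against the field $Y(a\T t^{-1},z)$ converts a polynomial in $z^{-1}$ into the same polynomial in $t^{-1}$ acting on the module; the proof is a one-line matching of coefficients.
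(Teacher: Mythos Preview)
Your proof is correct and is precisely the computation the paper has in mind: the paper's own proof consists of the single line ``Follows from $Y(a\T t^{-1},z)=\sum_{i\in\Z}z^{-i-1}(a\T t^i)$,'' and you have simply written out the residue extraction explicitly.
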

\begin{proof}
Follows from $Y(a\T t^{-1},z)=\sum_{i\in \Z}z^{-i-1}(a\T t^i)$.
\end{proof}

We first show that the left hand side of \eqref{=} contains the right hand side.
\begin{lem}
$O^c_{\bu}(\la)\hk O_\bu(\la).$\label{1}
\end{lem}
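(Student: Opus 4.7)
The plan is to exhibit, for each generator of $O^c_\bu(\la)$, an explicit element of the form $Y_\bu^{(M)}(A)v$ that equals it. Since $u_1=\infty$ gives $u_1^{-1}=0$, the subspace
\[
\g\T\prod_{j=1}^n(t^{-1}-u_j^{-1})\C[t^{-1}] = \g\T t^{-1}\prod_{j=2}^n(t^{-1}-u_j^{-1})\C[t^{-1}]
\]
is linearly spanned by elements $a\T t^{-1-m}\prod_{j=2}^n(t^{-1}-u_j^{-1})$ with $a\in\g$ and $m\ge 0$. Consequently $O^c_\bu(\la)$ is spanned by the vectors $\bigl(a\T t^{-1-m}\prod_{j=2}^n(t^{-1}-u_j^{-1})\bigr)v$ with $v\in L_\la$.

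Next, I would match these against the generators of $O_\bu(\la)$ using the simplest available choice of field, namely $A=a\T t^{-1}l_0\in\V_k(\g)$, which has conformal weight $|A|=1$. Plugging into the definition \eqref{YM} gives
\[
Y_\bu^{(M)}(a\T t^{-1}l_0)=\Res\Bigl(z^{-M}\prod_{j=2}^n(z^{-1}-u_j^{-1})\,Y(a\T t^{-1},z)\Bigr).
\]
By Lemma \ref{Res} applied with the scalars $p_j=u_j^{-1}$ for $j=2,\dots,n$, this residue evaluates to $a\T t^{-M}\prod_{j=2}^n(t^{-1}-u_j^{-1})$.

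Setting $M=m+1$ for arbitrary $m\ge 0$ (which satisfies the condition $M>0$ in the definition of $O_\bu(\la)$), I obtain
\[
Y_\bu^{(m+1)}(a\T t^{-1}l_0)=a\T t^{-1-m}\prod_{j=2}^n(t^{-1}-u_j^{-1}),
\]
and acting on any $v\in L_\la$ realizes every spanning vector of $O^c_\bu(\la)$ as an element of $O_\bu(\la)$, completing the inclusion. I do not anticipate any real obstacle: the only point requiring care is that the absence of a factor $(z^{-1}-u_1^{-1})$ in \eqref{YM}, combined with the convention $u_1^{-1}=0$, is precisely what produces the extra $t^{-1}$ factor that makes $\prod_{j=2}^n(t^{-1}-u_j^{-1})$ on the fields side correspond to $\prod_{j=1}^n(t^{-1}-u_j^{-1})$ on the coinvariants side.
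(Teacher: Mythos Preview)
Your proof is correct and follows exactly the same approach as the paper: take $A=a\T t^{-1}$ (so that $|A|=1$), apply the definition \eqref{YM} together with Lemma~\ref{Res}, and observe that the resulting operators $a\T t^{-M}\prod_{j=2}^n(t^{-1}-u_j^{-1})$ for $M>0$ span the algebra defining $O^c_\bu(\la)$. Your write-up simply makes explicit the spanning set and the role of the convention $u_1^{-1}=0$, which the paper leaves implicit.
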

\begin{proof}
Take $a\in \g$. Since $|a\T t^{-1}|=1$ we have
$$
Y_{\bu}^{(M)}(a\T t^{-1})=\Res z^{-M}\prod^n_{j=2}(z^{-1}-u_j^{-1})Y(a\T t^{-1},z).
$$
Now our lemma follows from Lemma \ref{Res} and definition of $O_{\bu}(\la).$
\end{proof}

We now want to prove the reverse inclusion $O_\bu(\la)\hk O^c_\bu(\la)$.
We say that an operator $X\in \mathrm{End}(L_\la)$ is good, if
\[
\mathrm{Im}(X)\hk (\g\T\prod^n_{j=1}(t^{-1}-u^{-1}_j)\C[t^{-1}])L_{\la}.
\]
Of course, being good or not depends on $\bu$ and $\bH$. In what follows we fix $\bu$ and $\bH$,
so we say simply "good". Then the inclusion
\[
O_{\bu}(\la)\hk O^c_\bu(\la)
\]
is equivalent to the statement that all operators $Y_\bu^{(M)}(A,z)$,
$M>0$, $A\in\V$ are good.

We first consider the elements
$A=a\T t^{-s}, s\ge 1$.
\begin{lem}\label{a_s}
For any $M>0, s>0$ and $a\in\g$ the operators $Y^{(M)}_{\bu}(a\T t^{-s})$ are good.
\end{lem}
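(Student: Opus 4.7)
The plan is to use the VOA reconstruction formula \eqref{VOA}, which gives $Y(a\T t^{-s},z) = \frac{1}{(s-1)!}\partial^{s-1}a(z)$ with $a(z) := Y(a\T t^{-1},z)$, and then move $\partial^{s-1}$ off $a(z)$ by integration by parts. Since $\Res\,\partial h(z) = 0$ for any formal Laurent series $h$, this reduces the statement to
\[
Y_\bu^{(M)}(a\T t^{-s}) = \frac{(-1)^{s-1}}{(s-1)!}\,\Res\bigl(a(z)\cdot \partial^{s-1}F(z)\bigr),\qquad F(z) := z^{-1-M+s}\prod_{j=2}^n(z^{-1}-u_j^{-1})^s,
\]
so the whole task becomes understanding the Laurent polynomial $\partial^{s-1}F(z)$.

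The central claim is that $\partial^{s-1}F(z) = \prod_{j=2}^n(z^{-1}-u_j^{-1})\cdot Q(z)$ with $Q(z) = \sum_{m\ge M}q_m z^{-m}$. Divisibility by the full product follows from Leibniz: when $s-1$ derivatives are distributed among the $n$ factors of $F$, each of the $n-1$ factors $(z^{-1}-u_j^{-1})^s$ retains at least one copy of $(z^{-1}-u_j^{-1})$, simply because $s-1 < s$. For the degree bound on $Q$, substitute $w = z^{-1}$ so that $\partial_z = -w^2\partial_w$ acts on monomials by $w^\ell \mapsto -\ell w^{\ell+1}$; hence $\partial_z^{s-1}$ raises the minimal $w$-degree of any Laurent polynomial by at least $s-1$. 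Applied to $F(z) = w^{1+M-s}\prod_{j=2}^n(w-u_j^{-1})^s$, whose minimal $w$-degree is $1+M-s$ plus a nonnegative shift $\nu s$ coming from those $u_j$ with $j\ge 2$ that equal $\infty$, this yields minimal $w$-degree at least $M+\nu s$ for $\partial_z^{s-1}F$; dividing by $\prod_{j=2}^n(w-u_j^{-1})$, whose minimal $w$-degree is exactly $\nu$, produces $Q$ with minimal $w$-degree at least $M+\nu(s-1)\ge M$, as required.

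Given the factorization, Lemma \ref{Res} applied term by term to $Q(z)$ gives
\[
\Res\bigl(a(z)\cdot z^{-m}\prod_{j=2}^n(z^{-1}-u_j^{-1})\bigr) = (a\T t^{-m})\prod_{j=2}^n(t^{-1}-u_j^{-1}),
\]
and since $m\ge M\ge 1$, one has $t^{-m}\in t^{-1}\C[t^{-1}]$, so each summand lies in $\g\T t^{-1}\prod_{j=2}^n(t^{-1}-u_j^{-1})\C[t^{-1}] = \g\T\prod_{j=1}^n(t^{-1}-u_j^{-1})\C[t^{-1}]$; its action on any vector of $L_\la$ therefore lands in $O^c_\bu(\la)$, proving goodness. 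The main obstacle is the degree bookkeeping for $Q(z)$, especially when several $u_j$ with $j\ge 2$ equal $\infty$: the extra $w$-factors this forces on $F$ are precisely matched by the extra $w$-factors they contribute to $\prod_{j=2}^n(w-u_j^{-1})$, so the bound $\deg_z Q \le -M$ survives uniformly and no case analysis on the $u_j$'s is actually needed.
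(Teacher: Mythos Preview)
Your proof is correct and follows essentially the same route as the paper: integration by parts to move $\partial^{s-1}$ onto the prefactor $F(z)=z^{-1-M+s}\prod_{j=2}^n(z^{-1}-u_j^{-1})^s$, the Leibniz rule to see that each factor $(z^{-1}-u_j^{-1})$ survives, and then Lemma~\ref{Res}. The only difference is cosmetic: the paper expands $\partial^{s-1}F$ explicitly into terms $z^{-M-\sum b_j}\prod_{j=2}^n(z^{-1}-u_j^{-1})^{s-b_j}$ with $0\le b_j\le s-1$ and reads off both the divisibility and the exponent bound from this formula, whereas you factor out a single copy of $\prod_{j=2}^n(z^{-1}-u_j^{-1})$ and obtain the exponent bound on the remaining $Q(z)$ via the substitution $w=z^{-1}$ and the observation that $\partial_z=-w^2\partial_w$ raises the minimal $w$-degree by at least one. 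Your handling of the case where some $u_j=\infty$ (tracking the extra $\nu s$ versus $\nu$ in numerator and denominator) is a nice explicit check that the paper leaves implicit.
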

\begin{proof}
Since $|a\T t^{-s}|=s$ we obtain (see formula \eqref{VOA})
\begin{multline*}
(s-1)! Y^{(M)}_{\bu}(a\T t^{-s})=\Res z^{-1-M+s}
\prod^n_{j=2}(z^{-1}-u_j^{-1})^s\pa^{s-1}{Y(a\T t^{-1},z)}\\
=(-1)^{s-1}\Res \left( \pa^{s-1} \left[(z^{-1-M+s}\prod^n_{j=2}(z^{-1}-u^{-1}_j)^s)\right]Y(a\T t^{-1},z)\right).
\end{multline*}
We note that
\begin{multline*}
\pa^{s-1}\left(z^{-1-M+s}\prod^n_{j=2}(z^{-1}-u^{-1}_j)^s\right)\\=
\sum_{d_1+\dots + d_n=s-1} c_{\bf d}(\pa^{d_1} z^{-1-M+s})
\prod^n_{j=2}\pa^{d_j}{(z^{-1}-u^{-1}_j)^s}\\ =
\sum 
c_{{\bf d},{\bf b}} z^{-1-M+s-d_1}z^{ -\sum^n_{j=2} d_j-\sum^n_{j=2} b_j} \prod^n_{j=2}(z^{-1}-u^{-1}_j)^{s-b_j},
\end{multline*}
where $c_{\bf d}, c_{\bf d,\bf b}$ are some constants and the sum runs over
$${\bf d}=(d_1,\dots,d_n)\in\Z_{\ge 0}^n, \ {\bf b}=(b_2,\dots,b_n)\in\Z_{\ge 0}^{n-1}$$
such that $\sum_{i=1}^n d_i=s-1$ and
$b_i\le d_i$.
Therefore the operator $Y^{(M)}_{\bu}(a\T t^{-s})$ is equal to the
linear combination of terms
$$\Res \left(z^{-M-\sum^n_{j=2}b_j} \prod^n_{j=2}(z^{-1}-u^{-1}_j)^{s-b_j} Y(a\T t^{-1},z)\right).$$
Since $0\le b_j \le s-1$, our lemma follows from Lemma \ref{Res}.
\end{proof}

In general, we want to prove that for any
$a_1,\dots,a_m \in\g, s_1,\cdots,s_m>0$ and $M>0$ we have
\begin{equation}\label{gen}
Y^{(M)}_{\bu}(a_1\T t^{-s_1}\dots a_m\T t^{-s_m})L_\la\hk O^c_{\bu}(\la).
\end{equation}
We first reduce \eqref{gen} to the case to $s_1=1$.
\begin{lem}\label{s=1}
Assume \eqref{gen} holds for $s_1=1$ and arbitrary $a_1,\dots,a_m\in\g$ and $s_2,\dots,s_m>0$.
Then \eqref{gen} holds for arbitrary $s_1>0$ as well.
\end{lem}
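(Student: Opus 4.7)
The plan is to induct on $s_1$, with base case $s_1=1$ being exactly the hypothesis of the lemma. For the inductive step ($s_1\ge 2$), I will use the identity $\Res\,\pa(\cdot)=0$ together with the Virasoro property \eqref{Vir}, written as $\pa Y(w,z)=Y(T_{-1}w,z)$, to express $Y_\bu^{(M)}(v)$ as a linear combination of operators $Y_\bu^{(M')}(v')$ in which the first-slot exponent has dropped from $s_1$ to $s_1-1$; the inductive hypothesis will then finish the step.

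Set $w=a_2\T t^{-s_2}\cdots a_m\T t^{-s_m}l_0$ and $f(z)=z^{-1-M+|v|}\prod_{j=2}^n(z^{-1}-u_j^{-1})^{|v|}$, so that formula \eqref{VOA} gives $(s_1-1)!\,Y(v,z)={:}\pa^{s_1-1}a_1(z)\,Y(w,z){:}$. Starting from $\Res\,\pa\!\bigl(f(z)\,{:}\pa^{s_1-2}a_1(z)\,Y(w,z){:}\bigr)=0$ and expanding by the Leibniz rule together with \eqref{Vir}, one obtains
\begin{equation*}
(s_1-1)!\,Y_\bu^{(M)}(v)=-\Res\bigl[\pa f(z)\cdot{:}\pa^{s_1-2}a_1(z)\,Y(w,z){:}\bigr]-\Res\bigl[f(z)\cdot{:}\pa^{s_1-2}a_1(z)\,Y(T_{-1}w,z){:}\bigr].
\end{equation*}
Apply the reconstruction \eqref{VOA} to each normally ordered product. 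The first term rewrites as $(s_1-2)!\Res\,\pa f\cdot Y(v^{(1)},z)$ with $v^{(1)}=a_1\T t^{-(s_1-1)}a_2\T t^{-s_2}\cdots a_m\T t^{-s_m}l_0$ of weight $|v|-1$; expanding $\pa f$ and factoring out $\prod_{j=2}^n(z^{-1}-u_j^{-1})^{|v|-1}$ displays this as a finite combination of $Y_\bu^{(M')}(v^{(1)})$ with $M'>0$. For the second term, the relations $T_{-1}l_0=0$ and $[T_{-1},a\T t^{-s}]=s(a\T t^{-s-1})$ (from the Segal--Sugawara construction) give $T_{-1}w=\sum_{i=2}^m s_i\,w_i$, where $w_i$ is obtained from $w$ by raising $s_i$ by one, and the same reconstruction converts the term into a combination of $Y_\bu^{(M)}(a_1\T t^{-(s_1-1)}w_i)$. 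In every case the vector appearing has first-slot exponent $s_1-1<s_1$, so the inductive hypothesis (applicable for all choices of $s_2,\dots,s_m$) makes it good.

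The only delicate point is verifying that every $M'$ produced by differentiating $f$ remains strictly positive, so that the inductive hypothesis is legitimately available; this is immediate because $M>0$ and the expansion of $\prod_{j=2}^n(z^{-1}-u_j^{-1})$ contains only non-positive powers of $z$, so Leibniz can only shift the $z^{-1}$-exponent upwards. The whole computation is essentially an iterated version of the manipulation already carried out in Lemma \ref{a_s}, so I do not expect a substantive obstacle beyond this bookkeeping.
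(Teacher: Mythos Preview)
Your proof is correct and follows essentially the same idea as the paper's: integration by parts (using $\Res\pa(\cdot)=0$ together with the Virasoro identity \eqref{Vir}) to move the derivative(s) off of $a_1(z)$ onto the prefactor and onto $B(z)=Y(w,z)$, thereby lowering the first-slot exponent. The only organizational difference is that the paper transfers all $s_1-1$ derivatives at once via a multinomial Leibniz expansion (reducing directly to the $s_1=1$ case), whereas you peel off one derivative at a time and induct on $s_1$; the verification that the resulting $M'$ stay positive is the same in both.
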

\begin{proof}
Let
$B(z)=Y(a_2\T t^{-s_2}\cdots a_m\T t^{-s_m}l_\la,z)$
and $s=s_2+\dots +s_m$. Then
\begin{multline*}
(s_1-1)!Y^{(M)}_{\bu}(a_1\T t^{-s_1}\dots a_m\T t^{-s_m})\\
=\Res z^{-1-M+s_1+s}\prod^n_{j=2}(z^{-1}-u^{-1}_j)^{s_1+s}:(\pa^{s_1-1} a_1(z))B(z):,
\end{multline*}
where $a_1(z)=\sum_{i\in \Z}(a_1\T t^i)z^{-i-1}$.
Since the residue of a full derivative vanishes, we obtain for some constants $c_{l_1,l_2,l_3}$
\begin{multline*}
Y^{(M)}_{\bu}(a_1\T t^{-s_1}\cdots a_m\T t^{-s_m})\\
=\Res  \sum_{\substack{l_1+l_2+l_3=s_1 -1 \\ l_1,l_2,l_3\ge 0}}c_{l_1,l_2,l_3}
(\pa^{l_1} z^{-1-M+s_1+s})\\
\times (\pa^{l_2}\prod^n_{j=2}(z^{-1}-u_j^{-1})^{s_1+s}) :a_1(z)\pa^{l_3}_zB(z):.
\end{multline*}
We note (see \eqref{Vir}) that
$\pa^{l_3} B(z)$ is a linear combination of fields $Y(b_p,z)$ of weight
$l_3+s$, i.e.
$\pa^{l_3} B(z)=\sum^{m-1}_{p=1}\alpha_p Y(b_p,z)$, where
$\alpha_p$ are some constants and each $b_p$ is of the form
\[
b_p=a_2\T t^{-\bar s_2}\cdots a_m\T t^{-\bar s_m}, \quad
\bar s_2 + \cdots + \bar s_m=s+l_3,\ \bar s_i>0.
\]
Therefore, for some constants $\bar c_{l_1,l_2,l_3}$ we have
\begin{multline*}
Y^{(M)}_{\bu}(z) =
\Res \sum_{\substack{l_1+l_2+l_3=s-1\\p=1,\dots,m-1}}\bar c_{l_1,l_2,l_3}
z^{(-1-M-l_1+s_1-1-l_3)+1+s+l_3}\\ \times\pa^{l_2}\prod^n_{j=2}(z^{-1}-u^{-1}_j)^{s_1+s}
:a_1(z) Y(b_p,z):.
\end{multline*}
We note that since $l_2\le s_1-1$, the derivative $\pa^{l_2}\prod^n_{j=2}(z^{-1}-u^{-1}_j)^{s_1+s}$
is a linear combination of terms
$z^{-k}\prod^n_{j=2}(z^{-1}-u^{-1}_j)^{s_1-k_j+s}$ for some constants satisfying
$0\le k_2,\dots,k_n\le l_2$ and $k>l_2$.
We thus obtain that $Y^{(M)}_{\bu}(z)$ is equal to a linear combination of terms
of the form $(M_1>0)$
$$\Res (z^{-1-M_1+1+s+l_3}\prod^n_{j=2}(z^{-1}-u^{-1}_j)^{1+s+l_3}:a_1(z)Y(b_p,z):).$$
This residue is equal to
$$Y_\bu^{(M_1)}(a_1\T t^{-1} \bar a_2\T t^{-\bar s_2}\dots \bar a_m\T t^{-\bar s_m}),$$
for some $\bar a_i$, $\bar s_i$.
Thus inclusion \eqref{gen} is reduced to the case $s_1=1$.
\end{proof}

In what follows we need an explicit form of operators $Y^{(M)}_\bu(B)$. Let us introduce constants
$N^{(s)}$ and $\al_p^{(s)}$, $p=0,\dots,N^{(s)}$ by the formula:
\[
N^{(s)}=(n-1)s,\quad \sum^{N^{(s)}}_{p=0}\alpha^{(s)}_pz^{-p}=\prod^n_{j=2}(z^{-1}-u^{-1}_j)^s.
\]
For example, $\al_0^{(s)}=(-1)^{(n-1)s}\prod_{j=2}^n u_j^{-s}$ and $\al_{N^{(s)}}^{(s)}=1$.
\begin{lem}\label{YB}
 Let $|B|=s$. Then
\[
Y^{(M)}_\bu(B)=\sum_{p=0}^{N^{(s)}} \al_p^{(s)} B_{(-M-p)}.
\]
\end{lem}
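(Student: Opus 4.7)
The plan is to prove Lemma \ref{YB} by direct substitution: plug the mode expansion of the field $Y(B,z)$ and the defining expansion of the polynomial $\prod_{j=2}^n(z^{-1}-u_j^{-1})^s$ into the formula \eqref{YM} for $Y^{(M)}_\bu(B)$, then read off the residue.

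Concretely, since $|B|=s$, I would start from
\[
Y^{(M)}_\bu(B)=\Res\Bigl(z^{-1-M+s}\prod_{j=2}^n(z^{-1}-u_j^{-1})^s\, Y(B,z)\Bigr),
\]
expand $Y(B,z)=\sum_{m\in\Z} B_{(m)}z^{-m-s}$, and insert the definition
$\prod_{j=2}^n(z^{-1}-u_j^{-1})^s=\sum_{p=0}^{N^{(s)}}\alpha_p^{(s)} z^{-p}$. The integrand then becomes
\[
\sum_{p=0}^{N^{(s)}}\sum_{m\in\Z}\alpha_p^{(s)} B_{(m)}\, z^{-1-M-p-m}.
\]
Taking the formal residue picks out the unique term with exponent $-1$, i.e.\ $m=-M-p$, which yields exactly $\sum_{p=0}^{N^{(s)}}\alpha_p^{(s)} B_{(-M-p)}$.

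There is essentially no obstacle here: the only thing worth being careful about is the bookkeeping of the shift $-s$ coming from $|B|=s$ in the mode expansion, which combines with the prefactor $z^{-1-M+s}$ to produce the clean exponent $-1-M-p-m$. The finiteness of the sum over $p$ is automatic from the polynomial nature of $\prod_{j=2}^n(z^{-1}-u_j^{-1})^s$, and no convergence issue arises because extracting a residue against a single mode $B_{(m)}$ involves only finitely many nonzero terms for each fixed $M$.
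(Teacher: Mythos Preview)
Your proof is correct and is exactly the direct computation the paper has in mind; the paper's own proof consists of the single word ``Straightforward.''
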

\begin{proof}
Straightforward.
\end{proof}

We now prove the following proposition.
\begin{prop}\label{induction}
Let $B(z)=Y(B,z)$ be a  field such that $Y^{(M)}_{\bu}(B)$ is good.
Then for any $a\in\g$ and $M>0$ the operator  $Y^M_{\bu}(a\T t^{-1}B)$ is also good.
\end{prop}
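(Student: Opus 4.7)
Set $s=|B|$ and write $f(z)=\prod_{j=2}^n(z^{-1}-u_j^{-1})=\sum_q\beta_q z^{-q}$, so that $F(t):=\prod_{j=1}^n(t^{-1}-u_j^{-1})=t^{-1}f(t)$ and $O^c_\bu(\la)=(\g\T F(t)\C[t^{-1}])L_\la$. The plan is to expand
\[Y^{(M)}_\bu(a\T t^{-1}B)=\Res_z z^{s-M}f(z)^{s+1}:a(z)Y(B,z):\]
via the splitting $:a(z)Y(B,z):\,=\,a(z)_- Y(B,z)+Y(B,z)a(z)_+$ together with the factorization $f(z)^{s+1}=f(z)\cdot f(z)^s$. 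Using Lemma \ref{YB}, both pieces rewrite in terms of $Y^{(M+m+q)}_\bu(B)$:
\begin{align*}
\text{creation}&=\sum_{m\le -1,\,q}\beta_q(a\T t^m)Y^{(M+m+q)}_\bu(B),\\
\text{annihilation}&=\sum_{m\ge 0,\,q}\beta_q Y^{(M+m+q)}_\bu(B)(a\T t^m).
\end{align*}

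For the annihilation part applied to $v\in L_\la$, the exponent $M+m+q\ge M\ge 1$ ensures each $Y^{(M+m+q)}_\bu(B)$ is good by hypothesis; since $(a\T t^m)v=0$ for $m\gg 0$, the sum is finite and each term lands in $O^c_\bu(\la)$.

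The creation part needs a preliminary sub-lemma: left multiplication by $x\T t^k$ preserves $O^c_\bu(\la)$ whenever $k\le 0$. For $(y\T p(t))w'$ with $p\in F(t)\C[t^{-1}]$ and $w'\in L_\la$, commuting $x\T t^k$ past $y\T p(t)$ yields the bracket $[x,y]\T t^kp(t)$ (the central term vanishes because $p\in t^{-n}\C[t^{-1}]$ has no $t^{-k}$-coefficient when $k\le 0$), and writing $p=F(t)h$ gives $t^kp=F(t)\cdot t^kh\in F(t)\C[t^{-1}]$. Reindexing the creation sum by $M'=M+m+q$ with $m\le -1$, the terms with $M'\ge 1$ are now immediate: $Y^{(M')}_\bu(B)v\in O^c_\bu(\la)$ by hypothesis, and the outer $a\T t^{M'-M-q}$ is a non-positive power of $t$, so it keeps the result in $O^c_\bu(\la)$.

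The main obstacle is the range $M'\le 0$, where goodness of $Y^{(M')}_\bu(B)$ is unavailable. The key observation is that for such $M'$ the constraint $m=M'-M-q\le -1$ is automatic for every $q\in\{0,\dots,n-1\}$, so the sum over $q$ collapses into a single Lie algebra element:
\[\sum_{q=0}^{n-1}\beta_q(a\T t^{M'-M-q})=a\T\bigl(t^{M'-M}f(t)\bigr)=a\T\bigl(t^{M'-M+1}F(t)\bigr).\]
Since $M'-M+1\le 0$, we have $t^{M'-M+1}\in\C[t^{-1}]$, so this element lies in $\g\T F(t)\C[t^{-1}]$ and the corresponding term, being its action on $Y^{(M')}_\bu(B)v\in L_\la$, lands in $O^c_\bu(\la)$ directly, without invoking any hypothesis on $B$. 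Combining the three contributions completes the proof.
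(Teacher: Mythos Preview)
Your argument is correct, and it is organized differently from the paper's proof in a way that makes it shorter. Both proofs split $:a(z)Y(B,z):$ into creation and annihilation parts and dispose of the annihilation part in the same way. For the creation part, however, the paper keeps $\prod_{j=2}^n(z^{-1}-u_j^{-1})^{s+1}$ intact, indexes by the $B$-mode, and is left with a ``middle range'' of modes for which it must establish a separate polynomial identity (the existence of $p(x,y)$, $q(x,y)$ in \eqref{pq}) proved in its own lemma. By contrast, your factorization $f(z)^{s+1}=f(z)\cdot f(z)^s$ attaches $f(z)^s$ to $Y(B,z)$ from the outset, so that every term is already of the form $(a\T t^m)\,Y^{(M')}_\bu(B)$; then the split $M'\ge 1$ versus $M'\le 0$, together with the observation that for $M'\le 0$ the $q$-sum is complete and collapses to $a\T t^{M'-M+1}F(t)\in\g\T F(t)\C[t^{-1}]$, finishes the argument without any auxiliary polynomial lemma. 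This is a genuine simplification.

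Two minor remarks. First, in your sub-lemma you write ``$p\in t^{-n}\C[t^{-1}]$''; in fact $F(t)\C[t^{-1}]\subset t^{-1}\C[t^{-1}]$ only (for instance $F(t)$ itself has a $t^{-1}$-term when $u_j\ne\infty$ for $j\ge 2$), but this is all you need: $p$ has no nonnegative powers of $t$, so $p_{-k}=0$ for $k\le 0$ and the central term vanishes. Second, you note finiteness of the annihilation sum via $(a\T t^m)v=0$ for $m\gg 0$; the creation sum over $M'\le 0$ is likewise finite on any $v\in L_\la$ because $B_{(r)}v=0$ for $r\gg 0$, which is worth saying explicitly.
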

\begin{proof}
By definition (see \eqref{VOA}) we have
$$Y(a\T t^{-1}Bl_\la,z)=:a(z)B(z):=\sum_{i<0}(a\T t^i)z^{-i-1}B(z)+\sum_{i\ge 0}B(z)(a\T t^i)z^{-i-1}.$$
Let $|B|=s$.
Then
\begin{multline}\label{tt}
Y^{(M)}_{\bu}(a\T t^{-1}B)=
\Res z^{-1-M+s+1} \prod^n_{j=2}(z^{-1}-u^{-1}_j)^{s+1}\sum_{i<0}(a\T t^i)z^{-i-1}B(z)\\
+\Res z^{-1-M+s+1}\prod^n_{j=2}(z^{-1}-u^{-1}_j)^{s+1}\sum_{i\ge 0}B(z)(a\T t^i)z^{-i-1}.
\end{multline}
We first consider the second summand, which is equal to
$$\sum_{i\ge 0}\Res z^{-1-M-i+s}\prod^n_{j=2}(z^{-1}-u^{-1}_j)^{s+1}B(z)(a\T t^i).$$
Since $\prod^n_{j=2}(z^{-1}-u^{-1}_j)$ is a polynomial in $z^{-1}$
and $Y^{(M)}_\bu(B)$ is good for any $M>0$, we get for any $i\ge 0$
$$
\Res \left( z^{-1-M-i+s}\prod^n_{j=2}(z^{-1}-u^{-1}_j)^{s+1}B(z)\right) L_\la\hk O^c_{\bu}(\la).
$$
Therefore the second summand of the right hand side of \eqref{tt} is good.

Now consider the first summand of the right hand side of \eqref{tt}.
Using the decomposition $$B(z)=\sum_{m\in \Z}B_{(m)}z^{-m-s},$$
it can be rewritten as
$$\sum_{m\in \Z} \Res z^{-M-m}\prod^n_{j=2}(z^{-1}-u^{-1}_j)^{s+1}\sum_{i<0}(a\T t^i)z^{-i-1}B_{(m)}.$$
Introducing a new variable $l$ such that $l=-M-m$, we rewrite further
\begin{equation}\label{aB}
\sum_{l\in \Z} \Res (z^l\prod^n_{j=2}(z^{-1}-u^{-1}_j)^{s+1}\sum_{i<0}(a\T t^i)z^{-i-1}B_{(-M-l)}).
\end{equation}

We consider three cases:
$$l<0,\qquad l\ge(n-1)(s+1), \qquad 0\le l\le (n-1)(s+1)-1.$$
The first two cases are simple and the last one is more involved.

Assume first that $l<0$.
Then $z^l\prod^n_{j=2}(z^{-1}-u^{-1}_j)^{s+1}$ is a polynomial in $z^{-1}$ without constant term and therefore the corresponding term in \eqref{aB} is equal to
$$\Res z^l\prod^n_{j=2}(z^{-1}-u^{-1}_j)^{s+1}a(z)B_{(-M-l)}.$$
Since $\prod^n_{j=2}(z^{-1}-u^{-1}_j)^{s+1}$ is a polynomial in $z^{-1},$ we conclude from Lemma \ref{a_s}
that all terms of \eqref{aB} with $l< 0$ are good.

Assume that $l\ge(n-1)(s+1).$ Then $z^l\prod^n_{j=2}(z^{-1}-u^{-1}_j)^{s+1}$  is a polynomial in $z$ and hence
$$\Res \left(z^l\prod^n_{j=2}(z^{-1}-u^{-1}_j)^{s+1}\sum_{i<0}(a\T t^i)z^{-i-1}\right)$$
vanishes.

We are left to show that the sum of the terms of \eqref{aB} with   $0\le l\le (n-1)(s+1)-1$
is good. This sum is equal to
\begin{multline*}
\sum_{l=0}^{N^{(s+1)}-1} \Res z^l\prod^n_{j=2}(z^{-1}-u^{-1}_j)^{s+1}\sum_{i<0}(a\T t^i)z^{-i-1}B_{(-M-l)}\\
=\Res\sum_{l=0}^{N^{(s+1)}-1}\sum_{p=0}^{N^{(s+1)}}\sum_{i<0} z^{l-p-i-1} \al_p^{(s+1)} (a\T t^i) B_{(-M-l)}\\
= \sum_{l=0}^{N^{(s+1)}-1}\sum_{i=1}^{N^{(s+1)}-l} \al_{i+l}^{(s+1)} (a\T t^{-i}) B_{(-M-l)}.
\end{multline*}
Therefore our goal is to show that the operator
\begin{equation}\label{s+1}
\sum^{N^{(s+1)}-1}_{l=0}\sum^{N^{(s+1)}-l}_{i=1}\alpha^{(s+1)}_{i+l}a_{(-i)}B_{(-M-l)}
\end{equation}
is good, where $a_{(-i)}=a\T t^{-i}$.
Recall (see Lemma \ref{YB}) that
\[
Y_\bu^{(M)}(a\T t^{-1})=\sum_{p=0}^{N^{(1)}} a_{(-M-p)}\al_p^{(1)}\ \text{ and }\
Y_\bu^{(M)}(B)=\sum_{p=0}^{N^{(s)}} B_{(-M-p)}\al_p^{(s)}.
\]
We know that for all $M>0$ these operators are good.
Note that if $X\in \mathrm{End} (L_\la)$
is good then we have
\begin{itemize}
\item for any $A\in \mathrm{End} (L_\la)$ the operator $XA$ is good;
\item if $i\le 0$ then for any $a\in\g$ the operator $(a\T t^i)X$ is good.
\end{itemize}
Therefore, in order to prove that \eqref{s+1} is good it suffices to represent \eqref{s+1}
as a linear combination of operators of the form $Y_\bu^{(M)}(a\T t^{-1})B_{(k_M)}$ and
$a_{(-i_M)}Y_\bu^{(M)}(B)$,
where $M>0$, $i_M, k_M\ge 0$. The existence of such presentation follows from
the following statement: there exist two polynomials $p(x,y)$ and $q(x,y)$ in two variables such that
\begin{equation}\label{pq}
\sum_{l=0}^{N^{(s+1)}-1}\sum_{i=1}^{N^{(s+1)}-l} \al_{i+l}^{(s+1)}x^iy^l=
\prod_{j=2}^n (x-u_j^{-1})p(x,y) + \prod_{j=2}^n (y-u_j^{-1})^s q(x,y).
\end{equation}
In fact, assume that such $p(x,y)$ and $q(x,y)$ exist, say
\[
p(x,y)=\sum_{m,r\ge 0} p_{m,r}x^my^r,\quad
q(x,y)=\sum_{m,r\ge 0} q_{m,r}x^my^r.
\]
Then it is easy to see that
\begin{multline*}
\sum^{N^{(s+1)}-1}_{l=0}\sum^{N^{(s+1)}-l}_{i=1}\alpha^{(s+1)}_{i+l}a_{(-i)}B_{(-M-l)}\\=
\sum_{m,r\ge 0} p_{m,r} Y^{(m+1)}_\bu(a\T t^{-1}) B_{(-r-M)} +
\sum_{m,r\ge 0} q_{m,r} a_{(-m-1)}Y^{(M+r)}_\bu(B).
\end{multline*}
Since the right hand side is good, the existence of $p,q$ as in \eqref{pq} implies
our proposition. We prove the existence in a separate lemma.
\end{proof}

\begin{lem}
Polynomials $p(x,y)$ and $q(x,y)$ satisfying \eqref{pq} exist.
\end{lem}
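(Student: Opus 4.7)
The plan is to find a closed form for the left hand side and then extract the decomposition by an elementary factorization. Set $P(x)=\prod_{j=2}^n(x-u_j^{-1})$, so that $P(x)^{s+1}=\sum_{k=0}^{N^{(s+1)}}\al_k^{(s+1)}x^k$, and denote the left hand side of \eqref{pq} by $F(x,y)$.

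First I would change summation variables from $(i,l)$ to $(k,l)$ with $k=i+l$, which converts the index set to $\{1\le k\le N^{(s+1)},\ 0\le l\le k-1\}$. Using the elementary identity $\sum_{l=0}^{k-1}x^{k-l}y^l=x(x^k-y^k)/(x-y)$, the sum collapses (the $k=0$ contribution being zero) to the compact form
\[
F(x,y)=\frac{x\bigl(P(x)^{s+1}-P(y)^{s+1}\bigr)}{x-y}.
\]

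Second, I would invoke the factorization $P(x)^{s+1}-P(y)^{s+1}=(P(x)-P(y))\sum_{j=0}^{s}P(x)^jP(y)^{s-j}$ and observe that $R(x,y):=(P(x)-P(y))/(x-y)$ is a genuine polynomial, since $P(x)-P(y)$ vanishes on the diagonal $x=y$. This gives $F(x,y)=xR(x,y)\sum_{j=0}^{s}P(x)^jP(y)^{s-j}$.

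Finally, I would separate the $j=0$ summand, which contributes $xR(x,y)P(y)^s$, from the summands with $j\ge 1$, each of which carries an explicit factor of $P(x)$. The resulting decomposition satisfies \eqref{pq} with
\[
p(x,y)=xR(x,y)\sum_{j=1}^{s}P(x)^{j-1}P(y)^{s-j},\qquad q(x,y)=xR(x,y).
\]
The only non-routine step is recognizing the closed form for $F(x,y)$ in step one; after that, the factorization and the extraction of the $P(x)$ and $P(y)^s$ pieces are formal.
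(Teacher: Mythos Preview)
Your proof is correct. The closed form $F(x,y)=x(P(x)^{s+1}-P(y)^{s+1})/(x-y)$ is right, the divided difference $R(x,y)=(P(x)-P(y))/(x-y)$ is indeed a polynomial, and separating the $j=0$ term from the rest produces polynomials $p,q$ as claimed.

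Your route differs from the paper's. The paper does not compute a closed form for the left hand side; instead it introduces the linear map $\varphi$ sending a one-variable polynomial $f(v)=\sum_r\beta_r v^r$ to $\sum_{i,l\ge 0}\beta_{i+l}x^iy^l$, observes that (after adding the harmless $i=0$ terms) the left hand side of \eqref{pq} is $\varphi\bigl(P(v)^{s+1}\bigr)$, and then uses the recursion $\varphi\bigl(f(v)(v-u)\bigr)=(x-u)\varphi(f)+yf(y)$ to peel off one linear factor $(v-u_j^{-1})$ at a time, accumulating a $P(x)$-multiple and a $P(y)^s$-multiple after $n-1$ steps. The two arguments are in fact closely related: your closed form is equivalent to $\varphi(f)=\dfrac{xf(x)-yf(y)}{x-y}$, which one could also use to verify the paper's recursion directly. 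Your approach is shorter and yields explicit formulas for $p$ and $q$; the paper's inductive peeling avoids the closed form and the $a^{s+1}-b^{s+1}$ factorization, trading directness for a step-by-step reduction.
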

\begin{proof}
Let us slightly modify the left hand side of \eqref{pq} by adding the terms
$\sum_{l=0}^{N^{(s+1)}} \al_l y^l$ (which correspond to the value $i=0$ in \eqref{pq}).
The sum we add is equal to $\prod_{j=2}^n (y-u_j^{-1})^{s+1}$ and therefore it suffices to prove
our lemma replacing the left hand side of \eqref{pq} by
$\sum_{l,i\ge 0} \al_{i+l}^{(s+1)} x^i y^l$, where $\al_r^{(s+1)}=0$ if $r>N^{(s+1)}$.

For a polynomial $f(v)=\sum_{r\ge 0} \beta_r v^r$ we define a polynomial $\varphi(f)$
in two variables $x,y$ by the formula
\[
\varphi(f)=\sum_{i,l\ge 0} \beta_{i+l} x^i y^l.
\]
It is easy to show that for any $u\in\C$
\[
\varphi(f(v)(v-u))=\varphi(f)(x-u) + yf(y).
\]
Using this equality, we compute
\begin{align*}
\varphi\left(\prod_{j=2}^n (v-u_j^{-1})^{s+1}\right)=&
(x-u_n^{-1})\varphi\left(\frac{\prod_{j=2}^n (v-u_j^{-1})^{s+1}}{(v-u_n^{-1})}\right) \\
&+ \frac{y\prod_{j=2}^n (y-u_j^{-1})^{s+1}}{y-u_n^{-1}}.
\end{align*}
The second term is already of the form of the right hand side of \eqref{pq}. The first term
is equal to
\[
(x-u_n^{-1})\left[
(x-u_{n-1}^{-1})\varphi\left(\frac{\prod_{j=2}^n (v-u_j^{-1})^{s+1}}{(v-u_n^{-1})(v-u_{n-1}^{-1})}\right)
+ \frac{y\prod_{j=2}^n (y-u_j^{-1})^{s+1}}{(y-u_n^{-1})(y-u_{n-1}^{-1})}
\right].
\]
Again, the second term is already of the needed form. After $n$ iterations of this
procedure (omitting terms as above) we obtain as a result
\[
\prod_{j=2}^n (x-u_j^{-1})\varphi\left(\prod_{j=2}^n (v-u_j^{-1})^s\right) + y\prod_{j=2}^n (y-u_j^{-1})^s.
\]
This proves the lemma.
\end{proof}

We are now ready to prove the main theorem.
\begin{thm}\label{main}
$O_\bu(\la)=O^c_\bu(\la).$
\end{thm}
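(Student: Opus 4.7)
The plan is to bootstrap from the results already proved in this section. By Lemma \ref{1} we have $O_\bu^c(\la)\hookrightarrow O_\bu(\la)$, so only the reverse inclusion remains; equivalently, I must show that every operator $Y_\bu^{(M)}(A)$ with $M>0$ and $A\in \V_k(\g)$ is good. The reconstruction formula \eqref{VOA} expresses any $A\in\V_k(\g)$ as a linear combination of monomials $a_1\T t^{-s_1}\cdots a_m\T t^{-s_m}l_0$ with $a_i\in\g$ and $s_i>0$, so it suffices to establish the goodness statement \eqref{gen} for all such monomials.

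I would proceed by induction on the length $m$. The base case $m=1$ is precisely Lemma \ref{a_s}. For the inductive step, I assume that \eqref{gen} holds, uniformly in $M>0$, for every monomial of length $m-1$. Given a length-$m$ monomial $a_1\T t^{-s_1}\cdots a_m\T t^{-s_m}$, Lemma \ref{s=1} lets me reduce, without changing $m$, to the case $s_1=1$. Then, setting $B=a_2\T t^{-s_2}\cdots a_m\T t^{-s_m}l_0$, the inductive hypothesis says that $Y_\bu^{(M)}(B)$ is good for all $M>0$, and Proposition \ref{induction} immediately yields that $Y_\bu^{(M)}(a_1\T t^{-1}B)$ is good, closing the induction.

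The main thing to check is that the three ingredients assembled above dovetail correctly. In particular, both Proposition \ref{induction} and the rewriting used in Lemma \ref{s=1} require the inductive goodness hypothesis \emph{uniformly} in $M>0$, rather than for a single fixed $M$ --- an easy observation but essential. I do not expect any new computation at this stage; the proof should amount to citing Lemmas \ref{1}, \ref{a_s}, \ref{s=1} and Proposition \ref{induction} and organizing them into the induction described above.
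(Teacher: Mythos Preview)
Your proposal is correct and matches the paper's own proof almost verbatim: the paper also cites Lemma \ref{1} for one inclusion and, for the other, inducts on $m$ using Lemma \ref{a_s} as the base case and Lemma \ref{s=1} together with Proposition \ref{induction} for the step. Your explicit remark about needing the goodness hypothesis uniformly in $M>0$ is a helpful clarification but not a departure from the paper's argument.
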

\begin{proof}
Let us show that $O_\bu(\la)\hk O^c_\bu(\la).$  This is equivalent to the statement that
all operators
\begin{equation} \label{good}
Y_\bu^{(M)} (a_1\T t^{-i_1}\dots a_m\T t^{-i_m}),\ M>0,\ a_j\in\g,\ i_1,\dots,i_m>0
\end{equation}
are good. We prove this statement by induction on $m$. For $m=1$ it follows from Lemma \ref{a_s}.
The induction step can be performed using Proposition \ref{induction} and Lemma \ref{s=1}.

The reverse inclusion $O^c_\bu(\la)\hk O_\bu(\la)$ is proved in Lemma \ref{1}.
\end{proof}

\section{Coinvariants}\label{coinvariants}
In this section we discuss the following theorem:
\begin{thm}\label{coinv}
Let $L_\la$ be an integrable irreducible $\gh$-module with highest weight $\la\in P_+^k$.
Then we have an isomorphism of $\g^{\oplus n}$-modules
\begin{equation}\label{Vn}
L_\la/\g\T\prod_{j=1}^n (t^{-1}-u_j^{-1})\C[t^{-1}]\simeq
\bigoplus_{\mu_1,\dots,\mu_n\in P_+^k} N_{\mu_1,\dots,\mu_n}^{\la;k} V_{\mu_1}\T \dots \T V_{\mu_n},
\end{equation}
where $N_{\mu_1,\dots,\mu_n}^{\la;k}$ are level $k$ Verlinde numbers and
$u_j\in\mathbb{CP}^1\setminus\{0\}$ are pairwise distinct points.
\end{thm}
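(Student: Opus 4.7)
The plan is to reduce Theorem \ref{coinv} to established results on coinvariants of integrable affine modules, specifically those of \cite{FKLMM} and \cite{Fi}.

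First I would install the $\g^{\oplus n}$-module structure on the left-hand side of \eqref{Vn}. Since the $u_j^{-1}$ are pairwise distinct, the Chinese Remainder Theorem yields
\[
\C[t^{-1}]\Big/\prod_{j=1}^n(t^{-1}-u_j^{-1})\C[t^{-1}]\;\simeq\;\C^n
\]
via evaluation $f(t^{-1})\mapsto(f(u_1^{-1}),\dots,f(u_n^{-1}))$. Tensoring with $\g$ identifies the quotient Lie algebra $\g\T\C[t^{-1}]\big/\g\T\prod_{j=1}^n(t^{-1}-u_j^{-1})\C[t^{-1}]$ with $\g^{\oplus n}$ (abelian), so by Theorem \ref{main} the space $L_\la/O_\bu^c(\la)$ inherits a canonical $\g^{\oplus n}$-module structure with the $j$-th copy acting by the evaluation $t^{-1}\mapsto u_j^{-1}$.

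Next I would compute the multiplicity of each $V_{\mu_1}\T\cdots\T V_{\mu_n}$. The idea is to recognize $L_\la/O_\bu^c(\la)$ as the coinvariant space studied in \cite{FKLMM} (for $\g=\msl_2$) and in \cite{Fi} (in the general case), where it is shown that the dimensions and $\g^{\oplus n}$-isotypic components of these spaces are governed precisely by level-$k$ Verlinde numbers. More invariantly, by tensoring with $V_{\mu_1^*}\T\cdots\T V_{\mu_n^*}$ and taking joint $\g^{\oplus n}$-coinvariants, the resulting space is naturally a space of conformal blocks on $\mathbb{P}^1$ with $L_\la$ placed at $\infty$ and the $V_{\mu_j^*}$ placed at $u_j$; by the Verlinde formula its dimension is $N_{\mu_1,\dots,\mu_n}^{\la;k}$.

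Finally, to upgrade this multiplicity count to the isomorphism \eqref{Vn} as $\g^{\oplus n}$-modules, I would invoke semisimplicity: $L_\la/O_\bu^c(\la)$ decomposes as a direct sum of finite-dimensional integrable $\g^{\oplus n}$-modules of level bounded by $k$. This is a consequence of the integrability of $L_\la$, since local nilpotency of real-root vectors in $\gh$ passes through each evaluation $t^{-1}\mapsto u_j^{-1}$ and forces every $\mu_j$ appearing to lie in $P_+^k$. The main technical obstacle, in my view, is establishing the compatibility of the two interpretations of the coinvariant space in the second step (as a $\g^{\oplus n}$-multiplicity space on one side, and as a conformal block on the other), uniformly in all $n$ marked points; the cited work of \cite{FKLMM} and \cite{Fi} effectively performs this matching, after which \eqref{Vn} follows.
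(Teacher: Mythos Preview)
Your overall strategy coincides with the paper's: endow the quotient with a $\g^{\oplus n}$-structure via the Chinese Remainder isomorphism, then compute the multiplicity of each $V_{\mu_1}\T\cdots\T V_{\mu_n}$ as a conformal block and invoke \cite{FKLMM}, \cite{Fi}. A few points deserve sharpening. First, the appeal to Theorem~\ref{main} in your first step is irrelevant: the $\g^{\oplus n}$-action on $L_\la/O^c_\bu(\la)$ comes purely from the quotient of $\g\T\C[t^{-1}]$ and has nothing to do with the identification $O_\bu=O^c_\bu$. (Also, $\g^{\oplus n}$ is of course not abelian; presumably you meant that the $n$ summands commute.) Second, your description of the roles of the two references is slightly off. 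The paper does not use \cite{Fi} as a ``general $\g$'' version of \cite{FKLMM}. Rather, it proves a two-step chain
\[
\Hom_{\g^{\oplus n}}\bigl(V_{\mu_1}\T\cdots\T V_{\mu_n},\,L_\la/O^c_\bu(\la)\bigr)
\;\simeq\;\bigl(W_{\mu_1,k}\T\cdots\T W_{\mu_n,k}\T L_{\la^*}\bigr)\big/\g\T\eO[W]
\;\simeq\;\bigl(L_{\mu_1}\T\cdots\T L_{\mu_n}\T L_{\la^*}\bigr)\big/\g\T\eO[W],
\]
where $W_{\mu,k}$ are parabolic Verma (Weyl) modules. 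The first isomorphism is taken from the Appendix of \cite{FKLMM} (stated there for $\msl_2$, but the argument is insensitive to $\g$); the second, replacing Weyl modules by their integrable quotients inside the coinvariants, is what \cite{Fi} supplies. Your ``tensor with $V_{\mu_j^*}$ and take $\g^{\oplus n}$-coinvariants'' is exactly the left end of this chain, but the passage to genuine conformal blocks is mediated by the Weyl modules, which you have suppressed.

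Finally, the paper does not rely on your local-nilpotency argument for semisimplicity; instead it proves outright that $L_\la/O^c_\bu(\la)$ is finite-dimensional, by reducing to the $\msl_2$ case of \cite{FKLMM} via the root $\msl_2$-triples $\{e_\al,h_\al,f_\al\}$. Your route (each $e_\al\T t^{-m}$ is a real root vector, these commute for fixed $\al$, hence any linear combination acts locally nilpotently on $L_\la$ and on the quotient) is a legitimate alternative and in fact also yields finite-dimensionality once you note that the quotient is cyclic over $\g^{\oplus n}$; but you should state this explicitly rather than leave it implicit in the phrase ``decomposes as a direct sum''.
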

This theorem for $\g=\msl_2$ was proved in the Appendix of \cite{FKLMM}. Their proof
with minor modifications works for general $\g$ as well. For the readers convenience we give the
details below.

We first recall the definition of the Verlinde algebra (which is also referred to as
the fusion ring) and the Verlinde numbers $N_{\mu_1,\dots,\mu_n}^{\la;k}$ (see \cite{TUY}, \cite{V}). Let
$\mu_1,\dots,\mu_n,\la$ be some elements in $P^k_+$. Let $x_1,\dots, x_{n+1}\in\mathbb{CP}^1$
be a set of pairwise distinct points and let $U=\mathbb{CP}^1\setminus\{x_1,\dots,x_{n+1}\}$.
We denote the local coordinates at $x_i$ by $t_i$, $t_i=t-x_i$, where $t$ is a global coordinate
on $\C=\mathbb{CP}^1\setminus\{\infty\}$. If $x_i=\infty$, then we set $t_i=t^{-1}$. Let
$\eO[U]$ be the ring of $\C$-valued functions on $U$ and $\g\T \eO[U]$ be the Lie algebra
of $\g$-valued functions. Then one has the inclusion
\begin{equation}\label{incl}
\g\T\eO[U]\hk \bigoplus_{i=1}^{n+1} \g\T\C[t_i^{-1},t_i]]
\end{equation}
given by the Laurent expansions at points $x_i$ (here $\C[t_i^{-1},t_i]]$
denotes the space of Laurent series in $t_i$). The inclusion \eqref{incl} makes the tensor product
\[
L_{\mu_1}\T \dots \T L_{\mu_n}\T L_{\la^*}
\]
into $\g\T\eO[U]$-module (one thinks of $L_{\mu_i}$ as being attached to the point $x_i$,
$i=1,\dots,n$ and $L_{\la^*}$ being attached to $x_{n+1}$).
We now define the Verlinde numbers
\[
N_{\mu_1,\dots,\mu_n}^{\la;k}=\dim (L_{\mu_1}\T\dots\T L_{\mu_n}\T L_{\la^*}/\g\T\eO[U]).
\]
Here and in what follows for a vector space $V$ and a space $A$ of operators $A\subset \mathrm{End} (V)$
we write $V/A$ for $V/AV$. It is known that the Verlinde numbers are the structure constants
of an associative algebra referred to as the Verlinde algebra. This algebra has a basis $[\la]$
labeled by elements $\la\in P^k_+$ and the multiplication is given by
\[
[\mu_1]\dots [\mu_n]=\sum_{\la\in P_+^k} N_{\mu_1,\dots,\mu_n}^{\la;k} [\la].
\]

Our goal is to prove \eqref{Vn}. We first show that the left hand side of \eqref{Vn}
is finite-dimensional $\g^{\oplus n}=\underbrace{\g\oplus\dots\oplus\g}_n$-module.
\begin{lem}
The quotient space $L_\la/O^c_\bu(\la)$ is finite-dimensional and carries $n$ commuting actions
of $\g$.
\end{lem}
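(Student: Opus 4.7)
The plan has three steps. First, I identify the Lie algebra acting on the quotient. Since $u_1^{-1}=0$ and $u_2^{-1},\dots,u_n^{-1}$ are pairwise distinct nonzero complex numbers, the Chinese Remainder Theorem yields an isomorphism of commutative $\C$-algebras
\[
\C[t^{-1}]\Big/\prod_{j=1}^n(t^{-1}-u_j^{-1})\C[t^{-1}]\simeq\C^n
\]
via evaluation at $t^{-1}=u_j^{-1}$. Tensoring with $\g$ produces an isomorphism of Lie algebras $\g\T\C[t^{-1}]/\fm\simeq\g^{\oplus n}$, where $\fm:=\g\T\prod_j(t^{-1}-u_j^{-1})\C[t^{-1}]$. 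Because $\fm=\g\T I$ for an ideal $I\subset\C[t^{-1}]$, $\fm$ is a Lie ideal of $\g\T\C[t^{-1}]$, so $\ov{L}:=L_\la/O^c_\bu(\la)$ inherits a canonical $\g^{\oplus n}$-action, supplying the $n$ commuting copies of $\g$.

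Second, $\ov{L}$ is cyclic over $\U(\g^{\oplus n})$, generated by the image $\ov{l}_\la$ of $l_\la$: since $\widehat\fn^-=\fn^-\T 1\oplus\g\T t^{-1}\C[t^{-1}]\subset\g\T\C[t^{-1}]$, one has $L_\la=\U(\widehat\fn^-)l_\la\subset\U(\g\T\C[t^{-1}])l_\la$, and this descends to $\ov{L}=\U(\g^{\oplus n})\ov{l}_\la$. For finite-dimensionality, my strategy is to verify that each of the $n$ $\g$-summands in $\g^{\oplus n}$ acts integrably on $\ov{L}$; together with cyclicity this forces $\ov{L}$ to be finite-dimensional by iteration over the factors: the first $\g$-factor generates a finite-dimensional integrable cyclic submodule $V^{(1)}$ from $\ov{l}_\la$, and inductively $V^{(i)}:=\U(\g^{(i)})V^{(i-1)}$ is a finitely generated locally finite $\g^{(i)}$-module, hence finite-dimensional; after $n$ steps $\ov{L}=V^{(n)}$ is finite-dimensional.

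Third, to establish integrability of the $j$-th $\g$-factor, fix a root $\al\in\triangle_+$ and choose by Lagrange interpolation a polynomial $f_j\in\C[x]$ of degree $<n$ with $f_j(u_i^{-1})=\delta_{ij}$. Then $e_\al\T f_j(t^{-1})\in\g\T\C[t^{-1}]$ projects to the $j$-th copy of $e_\al$ in $\g^{\oplus n}$, and it is a linear combination of the mutually commuting real root vectors $e_\al\T t^{-s}$, $0\le s<n$, of $\gh$ (commutativity uses $[e_\al,e_\al]=0$ and the vanishing of the central term for $s,s'\ge 0$); each summand acts locally nilpotently on the integrable $L_\la$. The main obstacle is to pass from ``each $e_\al\T t^{-s}$ is locally nilpotent'' to ``the linear combination $e_\al\T f_j(t^{-1})$ is locally nilpotent,'' since this implication is not automatic for commuting operators on an infinite-dimensional space. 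The cleanest remedy is to restrict $L_\la$ to the affine $\msl_2$-subalgebra of $\gh$ attached to $\al$: by integrability this restriction decomposes as a direct sum of integrable highest weight modules, on each of which every element of $e_\al\T\C[t^{-1}]$ acts locally nilpotently by standard results on integrable affine $\msl_2$-representations. The analogous argument for the negative root vector $f_\al$ then completes the verification.
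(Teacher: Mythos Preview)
Your identification of the $\g^{\oplus n}$-action via the Chinese Remainder Theorem matches the paper's argument exactly. For finite-dimensionality, however, the paper proceeds differently: it reduces to the $\msl_2$ case proved in \cite{FKLMM} via a degree filtration, showing that $L_\la(N)\subset L_\la(N-1)+O^c_\bu(\la)$ once $N$ exceeds $\sum_{\al\in\triangle_+}N_\al$, by pushing all factors belonging to a fixed root $\al_0$ to the right of a monomial and applying the $\widehat{\msl_2}$ result. Your route---cyclicity plus integrability of each $\g^{(j)}$---is more conceptual and avoids the external reference; it relies instead on the standard theorem that a $\g$-module on which all simple root vectors act locally nilpotently is locally finite.

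One correction: your stated obstacle is not an obstacle. If $A,B$ are \emph{commuting} locally nilpotent operators on any vector space, then $A+B$ is locally nilpotent: given $v$ with $A^{a}v=0$ and $B^{b}v=0$, the binomial expansion shows $(A+B)^{a+b-1}v=0$, since every term $A^{k}B^{a+b-1-k}v=B^{a+b-1-k}A^{k}v$ has either $k\ge a$ or $a+b-1-k\ge b$. Hence $e_\al\T f_j(t^{-1})$, being a finite linear combination of the mutually commuting real root vectors $e_\al\T t^{-s}$ (each locally nilpotent on the integrable $L_\la$), is automatically locally nilpotent on $L_\la$ and therefore on $\ov L$. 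Your proposed remedy---decomposing the restriction to the affine $\msl_2$ attached to $\al$ as a direct sum of integrable highest weight modules---is both unnecessary and not obviously justified (that restriction need not lie in category $\mathcal O$ for $\widehat{\msl_2}$, so the decomposition into highest weight pieces is not automatic); moreover it would not resolve the concern you raised, since the same linear-combination issue would recur on each summand. With the elementary observation above in place, your argument goes through cleanly.
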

\begin{proof}
Note that the space  $O^c_\bu(\la)$ is stable with respect to the subalgebra $\g\T \C[t^{-1}]$.
Therefore, $L_\la/O^c_\bu(\la)$ is a module of the quotient algebra
\[
\g\T \C[t^{-1}]/\g\T\prod_{j=1}^n (t^{-1}-u_j^{-1})\C[t^{-1}]
\]
(the subalgebra we quotient out acts trivially). This quotient algebra
is isomorphic to $\g^{\oplus n}$: the isomorphism is induced from the composition
\[
\g\T\C[t^{-1}]\to \bigoplus_{j=1}^n \g\T\C[t^{-1}-u_j^{-1}]\to \bigoplus_{j=1}^n \g,
\]
where the last map comes from the Teylor expansion and the first one is the evaluations
at $t=u_j$.

We now prove that $L_\la/O^c_\bu(\la)$ is finite-dimensional. This statement is proved for
$\g=\msl_2$ in \cite{FKLMM}, so we reduce the general situation to the $\msl_2$-case.
Let us reformulate the finite-dimensionality of $L_\la/O^c_\bu(\la)$ in the following way:
Let $L_\la(N)$ be the linear span of the vectors
\[
x_1\T t^{-i_1}\cdots x_s\T t^{-i_s}l_\la
\]
for all $x_1,\dots,x_s\in\g$, $i_1,\dots,i_s>0$, $i_1+\dots +i_s\le N$. Then
$\dim (L_\la/O^c_\bu(\la))<\infty$ if and only if there exists a constant $N_k^\g$ depending on 
a level $k$ such that for 
all $N>N_k^\g$
\begin{equation}\label{x_s}
L_\la(N)\hk L_\la(N-1) + O^c_\bu(\la).
\end{equation}
Let us fix the Chevalley basis $\{e_\al, h_\al, f_\al\}_{\al\in\triangle_+}$
of $\g$.
Let $N_\al$ be the number $N^{\msl_2}_{k_\al}$, where $k_\al$ is the level of the restriction
of a level $k$ $\gh$-module to the subalgebra $\widehat{\msl}_2$, generated by
$e_\al\T t^i$, $h_\al\T t^i$, $f_\al\T t^i$. We prove that if
$N^\g_k=\sum_{\al\in\triangle_+} N_\al$, then \eqref{x_s} holds.

Clearly we can assume that all $x_i$ are from the set $\{e_\al, h_\al, f_\al\}_{\al\in\triangle_+}$.
We use the induction on $s$. If $s=1$ and $i>\sum_{\al\in\triangle_+} N_\al$, then
$x\T t^{-i}\in O^c_\bu(\la)$ by the $\msl_2$ arguments.  Now assume
$i_1+\dots +i_s >\sum_{\al\in\triangle_+} N_\al$. Then there exists a positive root
$\al_0\in\triangle_+$ such that
the sum of powers of $x_i$ with $x_i\in\{e_{\al_0},h_{\al_0},f_{\al_0}\}$
is greater than or equal to $N_{\al_0}$. By induction assumption we can assume that all factors of the form
$e_{\al_0}$, $h_{\al_0}$, $f_{\al_0}$ are on the right of the whole monomial
$x_1\T t^{-i_1}\cdots x_s\T t^{-i_s}$. Now the $\msl_2$ arguments and invariance of $O^c_\bu(\la)$
with respect to $\g\T\C[t^{-1}]$ complete the proof.
\end{proof}

Because of the Lemma above it suffices to prove that
\[
N_{\mu_1,\dots,\mu_n}^{\la;k}=\dim \Hom_{\g^{\oplus n}} (V_{\mu_1}\T\dots\T V_{\mu_n}, L_\la/O^c_\bu(\la)).
\]
The key point here is to consider the parabolic Verma modules . For $\la\in P^k_+$ 
the corresponding parabolic level $k$ Verma
module is a $\gh$-module defined as
\[
W_{\la,k}=\mathrm{Ind}_{\g\T\C[t]\oplus\C K}^{\gh} V_\la,
\]
where $\g\T\C[t]\oplus\C K$ acts on $V_\la$ as follows: $K$ acts by the scalar $k$ and
$\g\T\C[t]$ acts via the evaluation at the $\infty$ map: $x\T 1\mapsto x$, $x\T t^i\mapsto 0$ if $i>0$.

Let $W=\mathbb{CP}^1\setminus\{u_1^{-1},\dots,u_n^{-1},\infty\}$.
\begin{prop}
We have isomorphisms of vector spaces
\begin{align*}
\Hom_{\g^{\oplus n}} (V_{\mu_1}\T\dots\T V_{\mu_n}, L_\la/O^c_\bu(\la)) & \simeq
W_{\mu_1,k}\T\dots \T W_{\mu_n,k}\T L_{\la^*}/\g\T\eO[W],\\
& \simeq L_{\mu_1}\T\dots \T L_{\mu_n}\T L_{\la^*}/\g\T\eO[W].
\end{align*}
\end{prop}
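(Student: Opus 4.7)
The proof breaks into the two separate isomorphisms, which I would handle by different techniques.

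For the first isomorphism, the plan is an iterated Frobenius reciprocity followed by a M\"obius transformation. Each parabolic Verma module $W_{\mu_i,k}$ is free over $\U(\g\T t_i^{-1}\C[t_i^{-1}])$ on the top $V_{\mu_i}$, where $t_i$ is the local coordinate at the point $u_i^{-1}$. This allows one to absorb the polar modes of each $W_{\mu_i,k}$ into the remaining tensor factors by using elements of $\g\T\eO[W]$ which are regular at $u_i^{-1}$ (these give vanishing relations in the coinvariants, by Frobenius). Iterating this over $i=1,\ldots,n$ reduces the middle space to
\[
V_{\mu_1}\T\cdots\T V_{\mu_n}\T L_{\la^*}/(\g\T\C[t])L_{\la^*},
\]
viewed as a $\g^{\oplus n}$-module via evaluation at the $n$ points $u_i^{-1}$. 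Applying the M\"obius transformation $t\mapsto t^{-1}$ together with the Chevalley involution of $\gh$ (which sends $L_{\la^*}$ to $L_\la$ and swaps $t\leftrightarrow t^{-1}$) identifies this with the invariant form of the Hom space, which agrees with the left-hand side by the finite-dimensional duality $\Hom_{\g^{\oplus n}}(V,M)\simeq (V^*\T M)^{\g^{\oplus n}}$ applied to $V=V_{\mu_1}\T\cdots\T V_{\mu_n}$ and $M=L_\la/O^c_\bu(\la)$ (finite-dimensional by the previous lemma).

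For the second isomorphism, I would invoke the standard fact from WZW conformal block theory that Weyl modules can be replaced by integrable irreducibles without changing the coinvariants, provided all other insertions are integrable. Concretely, let $K_i\subset W_{\mu_i,k}$ be the kernel of the surjection $W_{\mu_i,k}\twoheadrightarrow L_{\mu_i}$; it is generated by the integrability element $(f_\theta\T t_i^{-1})^{k-(\mu_i,\theta)+1}v_{\mu_i}$. I would show $K_i\cdot M\subset (\g\T\eO[W])M$ for $M=W_{\mu_1,k}\T\cdots\T W_{\mu_n,k}\T L_{\la^*}$ by choosing a rational function $f\in\eO[W]$ whose expansion at $u_i^{-1}$ matches $t_i^{-1}$ modulo regular terms, and then using that its Laurent expansions at the other removed points act locally nilpotently on the respective integrable modules $L_{\la^*}$ and (by induction) on $L_{\mu_j}$ for $j\ne i$.

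The main obstacle is the Frobenius reciprocity step in the first isomorphism. The subalgebra $\g\T\eO[W]$ acts on the tensor product simultaneously via its Laurent expansions at all removed points, so separating out the induced structure of each $W_{\mu_i,k}$ from this global action requires careful bookkeeping of which rational functions in $\eO[W]$ are regular at which points, together with the short exact sequence of Lie algebras that arises when a point is added to or removed from $W$. The final coordinate change $t\mapsto t^{-1}$ is also essential to convert the ``geometric'' coinvariant description (rational functions regular on $W$) into the ``algebraic'' quotient $L_\la/O^c_\bu(\la)$ (polynomials in $t^{-1}$ vanishing at each $u_j$) used to define $O^c_\bu(\la)$.
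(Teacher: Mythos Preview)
Your sketch is on target and matches the content of the references the paper invokes: the paper itself offers no argument beyond pointing to \cite{FKLMM}, Appendix, for the first isomorphism and to \cite{Fi} for the second, and your Frobenius--reciprocity reduction together with the Weyl--to--irreducible replacement are exactly the mechanisms those references use.

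There is one genuine gap in your treatment of the second isomorphism. At the $i$-th step of your induction, when you replace $W_{\mu_i,k}$ by $L_{\mu_i}$, the factors sitting at the points $u_j^{-1}$ with $j>i$ are still Weyl modules, not yet irreducibles; appealing to local nilpotency on ``$L_{\mu_j}$ for $j\ne i$'' via the inductive hypothesis is therefore circular. The fix is easy and does not require integrability at those points at all: take $f=(t-u_i^{-1})^{-1}\in\eO[W]$, which has a simple pole at $u_i^{-1}$, is \emph{regular} at every other $u_j^{-1}$, and vanishes at $\infty$. Then at each $j\ne i$ the operator $f_\theta\T f$ acts on $W_{\mu_j,k}$ through non-negative modes in $t_j$; since $f_\theta$ is a root vector and hence nilpotent on each finite-dimensional energy-graded piece of $W_{\mu_j,k}$, a filtration-by-energy argument gives local nilpotency of $f_\theta\T f$ on $W_{\mu_j,k}$ directly. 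At $\infty$ the same $f$ has a simple zero, so the action on $L_{\la^*}$ is through strictly positive modes and is again locally nilpotent. With this correction your outline goes through.
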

\begin{proof}
The first isomorphism is proved in \cite{FKLMM}, Appendix (the restriction $\g=\msl_2$ is not
used in the proof). The second isomorphism is proved in \cite{Fi} for general $\g$.
\end{proof}

\begin{cor}
Theorem \ref{coinv} is true.
\end{cor}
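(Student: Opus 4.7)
My plan is to deduce Theorem \ref{coinv} essentially as a tautology from the Lemma and the Proposition, by standard Peter--Weyl bookkeeping on the reductive side.

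First, I would invoke the Lemma to conclude that $L_\la/O^c_\bu(\la)$ is a finite-dimensional $\g^{\oplus n}$-module. Semisimplicity of $\g^{\oplus n}$ then yields the canonical isotypic decomposition
\[
L_\la/O^c_\bu(\la)\;\simeq\; \bigoplus_{\mu_1,\dots,\mu_n\in P_+} \Hom_{\g^{\oplus n}}\bigl(V_{\mu_1}\T\dots\T V_{\mu_n},\, L_\la/O^c_\bu(\la)\bigr)\T V_{\mu_1}\T\dots\T V_{\mu_n},
\]
so the theorem reduces to computing each multiplicity space.

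Next, for $\mu_i\in P_+^k$ I would apply the second isomorphism of the Proposition, which identifies the Hom space with $L_{\mu_1}\T\dots\T L_{\mu_n}\T L_{\la^*}/(\g\T\eO[W])$. Choosing the marked points $x_i=u_i^{-1}$ for $i=1,\dots,n$ and $x_{n+1}=\infty$ in the definition of the Verlinde numbers recorded earlier in the section---this is precisely the setting $U=W$ that requires the $u_j$ to be pairwise distinct---the dimension of this quotient is by definition $N_{\mu_1,\dots,\mu_n}^{\la;k}$.

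The only loose end is to check that the multiplicity vanishes when some $\mu_i\notin P_+^k$, so that the sum in \eqref{Vn} may indeed be restricted to level-$k$ weights. I would handle this by invoking the first isomorphism of the Proposition with a parabolic Verma module $W_{\mu_i,k}$ at each site: the conformal-block quotient of $W_{\mu_1,k}\T\dots\T W_{\mu_n,k}\T L_{\la^*}$ by $\g\T\eO[W]$ vanishes unless $(\mu_i,\theta)\le k$ for every $i$, a standard consequence of the level-$k$ integrability of $L_{\la^*}$. The genuine difficulty in this chain lies in the Proposition itself, not in the Corollary; the two required identifications are imported from \cite{FKLMM} (parabolic Verma side) and \cite{Fi} (integrable side), and granting them, the Corollary follows by the routine assembly above.
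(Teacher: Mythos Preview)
Your proposal is correct and follows exactly the route the paper intends: the Lemma gives finite-dimensionality and the $\g^{\oplus n}$-action so that the question reduces to multiplicities, and the Proposition identifies these with the Verlinde numbers via the definition of the conformal blocks on $W$. You are in fact slightly more careful than the paper in explicitly treating the vanishing when some $\mu_i\notin P_+^k$, a point the paper leaves implicit.
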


\section{Applications and further directions}
Combining Theorem \ref{main} and Theorem \ref{coinv} we arrive at the following:
\begin{thm}\label{distinct}
Let $u_1,\dots,u_n$ be pairwise distinct points of $\mathbb{CP}^1\setminus\{0\}$. Then
$A_\bu$ has a natural structure of $\g^{\oplus n}$-module such that
\begin{equation}\label{A}
A_\bu(\la)\simeq
\bigoplus_{\mu_1,\dots,\mu_n\in P_+^k} N_{\mu_1,\dots,\mu_n}^{\la;k} V_{\mu_1}\T \dots \T V_{\mu_n}.
\end{equation}
\end{thm}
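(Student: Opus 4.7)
The plan is to deduce Theorem \ref{distinct} directly as the composition of Theorem \ref{main} and Theorem \ref{coinv}; essentially no new work is required beyond assembling these two ingredients and checking that the $\g^{\oplus n}$-structure they produce is natural.

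First, I invoke M\"obius invariance of the family $A_\bu(\bH)$ to reduce to the situation $u_1=\infty$, which is the normalization under which $O_\bu(\la)$ was defined in Section~2. Under this assumption Theorem \ref{main} gives the equality $O_\bu(\la)=O^c_\bu(\la)$, so that
\[
A_\bu(\la)=L_\la\Big/\Bigl(\g\T\prod_{j=1}^n (t^{-1}-u_j^{-1})\C[t^{-1}]\Bigr)L_\la.
\]
Because $u_2,\dots,u_n$ are pairwise distinct from each other and from $u_1=\infty$, the hypothesis of Theorem \ref{coinv} is met, and applying it to the right-hand side produces the claimed decomposition \eqref{A}.

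Next, I need to explain in what sense $\g^{\oplus n}$ acts on $A_\bu(\la)$. The subspace $O^c_\bu(\la)$ is stable under $\g\T\C[t^{-1}]$, so the quotient is a module over
\[
\g\T\C[t^{-1}]\Big/\g\T\prod_{j=1}^n (t^{-1}-u_j^{-1})\C[t^{-1}],
\]
and this quotient algebra is identified with $\g^{\oplus n}$ by the evaluation maps at $t=u_j$ (as used in the proof of Theorem \ref{coinv}; the evaluation at $u_1=\infty$ corresponds to the constant-term projection $\g\T\C[t^{-1}]\to\g$). This is the promised $\g^{\oplus n}$-structure.

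Finally, I should check that this structure is natural, i.e.\ compatible with M\"obius invariance used to normalize $u_1=\infty$. The point is that M\"obius invariance moves the $n$ marked points in $\bu$ as a whole, and the resulting identifications of the quotient spaces $A_\bu(\la)$ are $PSL_2$-equivariant, so the decomposition \eqref{A}, being indexed symmetrically by the $\mu_i$'s, is transported to the corresponding decomposition for any M\"obius-transformed configuration. There is no genuine obstacle here; the only thing to be careful about is keeping track of which factor of $\g^{\oplus n}$ corresponds to which point $u_j$, which is handled uniformly by the evaluation description above.
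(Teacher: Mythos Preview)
Your proposal is correct and follows exactly the paper's approach: the paper presents Theorem \ref{distinct} as an immediate consequence of combining Theorem \ref{main} with Theorem \ref{coinv}, and the $\g^{\oplus n}$-structure you describe via the quotient algebra $\g\T\C[t^{-1}]/\g\T\prod_{j=1}^n (t^{-1}-u_j^{-1})\C[t^{-1}]\simeq\g^{\oplus n}$ is precisely the one constructed in the Lemma of Section~\ref{coinvariants}. Your additional remarks on M\"obius invariance and naturality simply spell out normalizations already adopted in Section~2.
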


\begin{rem}
Note that the existence of $n$ commuting actions of $\g$ on $A_\bu(\la)$ can be
seen from the general formalism of systems of correlation functions, see
\cite{GabGod}, \cite{N}, \cite{GN}.
\end{rem}

We note that if $n=2$ and $\la=0$, then $N_{\mu_1,\mu_2}^{0;k}=\delta_{\mu_1,\mu_2^*}$ and hence
Theorem \ref{distinct} gives
\[
A_\bu(0)\simeq
\bigoplus_{\mu\in P_+^k} V_{\mu}\T  V_{\mu^*},
\]
which is the Zhu theorem for affine Kac-Moody VOAs (see \cite{Z}).
For general $\la$ and still $n=2$ we obtain
\[
A_\bu(\la)\simeq
\bigoplus_{\mu_1,\mu_2\in P_+^k} N_{\mu_1,\mu_2}^{\la;k} V_{\mu_1}\T V_{\mu_2},
\]
which is the Frenkel-Zhu theorem (see \cite{FZ}).

We note that the points $u_j$ in Theorem \ref{distinct} are
assumed to be pairwise distinct. However the spaces $A_\bu(\la)$ are defined for all
tuples of points of $\mathbb{CP}^1\setminus\{0\}$. It is a natural question whether the isomorphism
\eqref{A} holds true if some points coincide. There are several known cases when
it does for all $u_j$.

First, assume $n=2$, $\la=0$ and $\g=\msl_n$ or
$\g=\mathfrak{sp}_{2n}$. Then it is shown in \cite{FFL}, \cite{FL}, \cite{GG}, \cite{F} that
\eqref{A} is true even if $u_1=u_2$.

Now assume $\g=\msl_2$.
\begin{cor}
If $\g=\msl_2$, then \eqref{A} is true for all weights $\la\in P_+^k$ and tuples $\bu$.
\end{cor}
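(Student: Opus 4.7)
The plan is to combine Theorem \ref{main} with the $\msl_2$-specific coinvariant results of \cite{FKLMM}. Theorem \ref{main} was proved for arbitrary tuples $\bu$, with no pairwise-distinctness hypothesis, so it gives
\[
A_\bu(\la)\simeq L_\la/O^c_\bu(\la)= L_\la\Big/\Bigl(\g\T\prod_{j=1}^n(t^{-1}-u_j^{-1})\C[t^{-1}]\Bigr)L_\la
\]
for every $\bu\in(\mathbb{CP}^1\setminus\{0\})^n$. It therefore suffices, for $\g=\msl_2$, to establish that the dimension of this coinvariant space equals $\sum_{\mu_1,\dots,\mu_n\in P_+^k}N^{\la;k}_{\mu_1,\dots,\mu_n}\prod_{i=1}^n\dim V_{\mu_i}$ regardless of whether the $u_j$ are pairwise distinct.

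For pairwise distinct tuples this is exactly Theorem \ref{distinct}. To cover the collision locus, I would invoke the $\msl_2$ coinvariant computation from \cite{FKLMM}: there the authors produce explicit PBW-type spanning sets of $L_\la/O^c_\bu(\la)$ whose cardinality is independent of $\bu$ and matches the generic value supplied by Theorem \ref{distinct}. Combined with the opposite inequality coming from upper semicontinuity of fiber dimensions of the flat family $\{L_\la/O^c_\bu(\la)\}_\bu$, this forces $\dim L_\la/O^c_\bu(\la)$ to be constant over the entire configuration space. The isomorphism \eqref{A} as vector spaces then follows by equality of dimensions.

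The main obstacle is precisely the uniform upper bound across the discriminant: dimensions of coinvariants are generically only upper semicontinuous in the parameters, and ruling out jumps at collisions of the $u_j$ requires $\g$-specific combinatorial input. For $\msl_2$ the explicit monomial bases constructed in \cite{FKLMM} are rigid enough to survive specialization to the diagonal; analogous uniform constructions are not available for general $\g$, which is exactly why the present corollary is limited to the $\msl_2$ case.
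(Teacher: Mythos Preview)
Your argument is correct and follows the same overall strategy as the paper's: Theorem \ref{main} identifies $A_\bu(\la)$ with the coinvariant space $L_\la/O^c_\bu(\la)$ for \emph{all} $\bu$, and then \cite{FKLMM} is used to pin down the dimension. The paper's presentation is slightly more economical than yours: rather than invoking FKLMM spanning sets at every $\bu$, the paper reduces to the single maximally degenerate case $\bu=(\infty,\dots,\infty)$, where $O^c_\bu(\la)=(\g\T t^{-n}\C[t^{-1}])L_\la$, and cites \cite{FKLMM} only for that case. The (unstated) reason this reduction suffices is the standard associated-graded argument: the leading term of $\prod_j(t^{-1}-u_j^{-1})$ with respect to the energy grading is $t^{-n}$, so for every $\bu$ one has $\dim L_\la/O^c_\bu(\la)\le\dim L_\la/(\g\T t^{-n}\C[t^{-1}])$; combined with the opposite inequality coming from semicontinuity and Theorem \ref{distinct}, equality follows everywhere. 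One small terminological point in your write-up: you refer to a ``flat family'', but flatness is exactly what is in question here; what you are actually using (and what holds without any flatness hypothesis) is upper semicontinuity of cokernel dimensions in an algebraic family.
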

\begin{proof}
It suffices to prove this corollary if all points $u_j$ are equal to $\infty$.
In this case
\begin{equation}\label{infty}
A_\bu(\la)\simeq L_\la/\g\T t^{-n}\C[t^{-1}].
\end{equation}
It is proved in \cite{FKLMM} that the dimension of the right hand side of \eqref{infty}
coincides with the dimension of the right hand side of \eqref{A}. This proves the corollary.
\end{proof}

It would be very interesting to figure out whether \eqref{A} with posiibly coinciding
$u_j$ is true for all classical
$\g$ (note however that it is not true for $E_8$ and $n=2$, $k=1$, $\la=0$, $u_1=u_2$,
see for example \cite{GG}).

\section*{Acknowledgments}
We are grateful to M.Gaberdiel for useful discussion.
This work was partially supported by the Russian President Grant MK-281.2009.1,
by the RFBR Grant 09-01-00058 and by grant Scientific Schools-6501.2010.2, 
by Pierre Deligne fund based on his 2004 Balzan prize in mathematics and by EADS foundation chair in mathematics.


\begin{thebibliography}{99}
\bibitem[BF]{BF}
D.~Ben-Zvi and E.~Frenkel, Vertex algebras and algebraic curves,
Mathematical Surveys and Monographs, {\bf 88}, AMS, 2001.


\bibitem[DM]{DM}
C.~Dong, G.~Mason, {\it Integrability of $C_2$-cofinite vertex operator algebras},
arXiv:math/0601569.

\bibitem[DMS]{DMS}
P.~Di Francesco, P.~Mathieu, D.~S\'en\'echal,
{\it Conformal field theory}, Springer-Verlag, New York, 1997.

\bibitem[F]{F}
E.~Feigin, {\it The PBW filtration}, Represent. Theory 13  (2009), 165-181.

\bibitem[FKLMM]{FKLMM}
B.~Feigin, R.~Kedem, S.~Loktev, T.~Miwa, E.~Mukhin,
{\it Combinatorics of the $\widehat{\msl_2}$ spaces of coinvariants},
Transformation Groups 6 (1), (2001), 25-52.

\bibitem[FFL]{FFL}
B.~Feigin, E.~Feigin, P.~Littelmann,
{\it Zhu's algebras, $C_2$-algebras and abelian radicals},
arXiv:0907.3962.

\bibitem[FL]{FL}
E.~Feigin, P.~Littelmann, {\it Zhu's algebra and the $C_2$-algebra in the symplectic and the
orthogonal cases}, arXiv:0911.2957.

\bibitem[Fi]{Fi}
M.~Finkelberg, {\it An equivalence of fusion categories},
Geom. Funct. Anal. 6:2 (1996), 249-267.

\bibitem[FZ]{FZ}
I.~Frenkel, Y.~Zhu, {\it Vertex operator algebras associated to representations of affine
and Virasoro algebras}, Duke Math. J. 66 (1992), 123--168.

\bibitem[G]{G}
M.~Gaberdiel, {\it An Introduction to Conformal Field Theory},
Rept.Prog.Phys. 63 (2000) 607-667.

\bibitem[GG]{GG}
M.~R.~Gaberdiel, T.~Gannon, {\it Zhu's algebra, the $C_2$ algebra, and twisted modules},
arXiv:0811.3892

\bibitem[GabGod]{GabGod} M.~R.~Gaberdiel, P. Goddard,
{\it Axiomatic conformal field theory}, Commun. Math. Phys. 209
(2000), 549-594.


\bibitem[GN]{GN} M.~R.~Gaberdiel, A.~Neitzke, {\it Rationality, quasirationality
and finite W-algebras}, Commun. Math. Phys. 238 (2003), 305-331.


\bibitem[K1]{K1}
V.~Kac, Infinite dimensional Lie algebras, 3rd ed.,
Cambridge University Press, Cambridge, 1990.

\bibitem[K2]{K2}
V.~Kac, Vertex algebras for begginers, University Lecture Series,
{\bf 10}, 1997.


\bibitem[N]{N} A.~Neitzke,
{\it Zhu's theorem and an algebraic characterization of chiral blocks},
arXiv: hepth/0005144.

\bibitem[TUY]{TUY}
A.~Tsuchiya, K.~Ueno, Y.~Yamada,
{\it Conformal field theory on the universal family of stable curves with gauge symmetry},
Adv. Stud. Pure Math., 19 (1989), 459--466.

\bibitem[V]{V}
E.~Verlinde, {\it Fusion rules and modular transformations in 2D conformal field theory,}
Nuclear Physics B300 (1988), 360--376.

\bibitem[Z]{Z}
Y.~Zhu, {\it Modular invariance of characters of vertex operator algebras},
J. Amer. Math. Soc. 9 (1996), 237--302.
\end{thebibliography}
\end{document}